\newtheorem{thm}{Theorem}[section]
\newtheorem{cor}[thm]{Corollary}
\newtheorem{lemma}[thm]{Lemma}
\newtheorem{prop}[thm]{Proposition}
\numberwithin{equation}{section}
\theoremstyle{definition}
\newtheorem{rem}[thm]{Remark}
\newtheorem{example}[thm]{Example}
\newtheorem{definition}[thm]{Definition}
\newcommand{\bC}{{\mathbb{C}}}
\newcommand{\bQ}{{\mathbb{Q}}}
\newcommand{\bR}{{\mathbb{R}}}
\newcommand{\bT}{{\mathbb{T}}}
\newcommand{\bZ}{{\mathbb{Z}}}
  \newcommand{\A}{{\mathcal{A}}}
  \newcommand{\B}{{\mathcal{B}}}
  \newcommand{\C}{{\mathcal{C}}}
  \newcommand{\F}{{\mathcal{F}}}
  \newcommand{\G}{{\mathcal{G}}}
  \newcommand{\J}{{\mathcal{J}}}
  \newcommand{\M}{{\mathcal{M}}}
\renewcommand{\P}{{\mathcal{P}}}
  \newcommand{\U}{{\mathcal{U}}}
  \newcommand{\V}{{\mathcal{V}}}
\newcommand{\ul}{\underline  }
\newcommand{\ol}{\overline  }
\begin{document}


\title[Linear zero mode spectra for quasicrystals]{Linear zero mode spectra for quasicrystals}



\author[S. C. Power]{S. C. Power}

\address{Dept.\ Math.\ Stats.\\ Lancaster University\\
Lancaster LA1 4YF \\U.K. }

\email{s.power@lancaster.ac.uk}

\begin{abstract} A converse is given to the well-known
fact that a hyperplane localised zero mode of a crystallographic bar-joint framework gives rise to a line or lines in the zero mode (RUM) spectrum. These connections motivate definitions of  \emph{linear zero mode spectra}  for an aperiodic bar-joint framework $\G$ that are based on relatively dense sets of linearly localised flexes. For a Delone framework in the plane the  \emph{limit spectrum} ${\bf L}_{\rm lim}(\G,\ul{a})$ is defined in this way, as a subset of the reciprocal space for a reference basis $\ul{a}$ of the ambient space. A smaller spectrum,
the \emph{slippage spectrum} ${\bf L}_{\rm slip}(\G,\ul{a})$, 
is also defined.
For quasicrystal parallelogram frameworks associated with regular multi-grids, in the sense of de Bruijn and Beenker, these spectra coincide and are determined in terms of the geometry of $\G$. 
\end{abstract}

\thanks{2020 {\it  Mathematics Subject Classification.}
52C25, 52C23 \\
Key words and phrases: zero mode spectrum, parallelogram frameworks,  multigrid quasicrystals.
}

\maketitle


\section{Introduction}
A \emph{zero mode} of a crystallographic bond-node framework $\C$  in $\bR^d, d\geq 2,$ is considered here to be an excitation state of the nodes which has vanishing energy. These are also known as \emph{rigid unit modes} (\emph{RUMs}) or \emph{mechanical modes}. More precisely,  a zero mode is a simple harmonic motion oscillatory  state, with wave vector ${\bf k}$ in a reciprocal space $\bR^d_{\bf k}$, where the bonds are unstretched to first order.  Comparisons between simulations and experimental results have shown that the wave vectors of RUMs for simulated crystals coincide with those observed in material silicates and zeolites. See, for example, Giddy et al. \cite{gid-et-al}, 
Wegner \cite{weg} and the recent perspective of Dove \cite{dov-2019}. On the other hand, it is known that zero modes correspond to certain infinitesimal flexes \cite{pow-poly}. Also, the purely mathematical theory of infinitesimal flexibility and rigidity for periodic bar-joint frameworks is now well-developed as can be seen, for example, in the survey of Schulze and Whiteley \cite{sch-whi}. Moreover, recent articles
 have led to further mathematical insights into the spectra of zero modes. 
See, for example,
Badri, Kitson and Power \cite{bad-kit-pow-2}, Connelly, Shen and Smith \cite{con-she-smi}, 
Kastis, Kitson and McCarthy \cite{kas-kit-mcc} and Kastis and Power 
\cite{kas-pow-flexibility}.
 
Infinite bar-joint frameworks provide fundamental  mathematical models for material crystals and topological insulators (Dove et al \cite{dov-exotic}, Kane and Lubensky \cite{kan-lub}, Lubensky et al \cite{lub-et-al}, Rocklin et al \cite{roc-et-al}). The same is true for quasicrystals and the analysis of floppy modes (infinitesimal flexes) and zero modes (floppy modes with wave vectors in some sense). See for example, Stenull and Lubensky \cite{ste-lub} and Zhou et al \cite{zho-et-al}. In what follows we define \emph{linear zero mode spectra} for aperiodic bar-joint frameworks in $\bR^2$, that generalise certain linear structure in the crystallographic case, and these spectra are determined for quasicrystal parallelogram frameworks associated with regular multigrids.
 
The notion of an excitation mode wave vector for a quasicrystal framework $\G$ in $\bR^d$ is somewhat paradoxical since wave vectors are defined relative to a periodic structure for $\G$.
Indeed, for a crystallographic framework $\C$ one has a finite building block, or motif, of joints and bars, whose translates relative to a basis $\ul{a}$, generate $\C$. A zero mode $u$ is then a velocity field, satisfying the first-order flex conditions, that is determined by a finite velocity field $u_{\rm motif}$ on the motif joints together with a wave vector ${\bf k}$, or equivalently, in the complex case, by a unimodular phase-factor $\omega$ in the $d$-torus $\bT^d$. This infinitesimal flex definition of a zero mode is in analogy with Bloch's theorem in condensed matter physics. Since the phase-modulation of the velocities of $u_{\rm motif}$ within a translated block is constant we can view the entire phase modulation in terms of an \emph{ambient phase field} $\phi_\omega(x), x \in \bR^d,$ which is constant on the cells of a partition associated with the lattice for $\ul{a}$. In other words the zero mode $u$, with phase factor $\omega$ relative to the periodicity basis $\ul{a}$, is given as a pointwise product 
$
u = \phi_\omega \cdot \tilde{u}_{\rm motif},
$
where $\tilde{u}_{\rm motif}$ is the velocity field given by the periodic extension of ${u}_{\rm motif}$. That is, for each joint $p_i$,
\[
u(p_i) = (\phi_\omega\cdot \tilde{u}_{\rm motif})(p_i) = \phi_\omega(p_i)\tilde{u}_{\rm motif}(p_i).
\]

We generalise this particular mathematical formalism to aperiodic bar-joint frameworks $\G$ by considering phase fields associated with \emph{variable} vector space bases that are not necessarily commensurate with a fixed reference basis $\ul{a}$. 
Specifically, we consider how the presence of infinitesimal flexes which are approximately phase-periodic for specific directions can lead, in the limit, to the identification of lines in the reciprocal space for $\ul{a}$. A totality of such lines is considered as a \emph{linear zero mode spectrum}. 
In particular we determine such spectra for quasicrystal frameworks that are associated with regular multigrid parallelogram tilings, examples of which are the Penrose rhomb tilings \cite{deB} and the Ammann-Beenker tilings  \cite{bee}, from pentagrids and tetragrids respectively. 

In Section \ref{s:crystallographic} we give a self-contained account of the zero mode spectrum of a crystallographic bar-joint framework (or crystal framework) $\C$ in $\bR^d$. Moreover, we show in Theorem \ref{t:bandconverseDim2} that lines in the (unreduced) zero mode spectrum ${\bf K}(\C,\ul{a})$ \emph{necessarily} arise from hyperplane localised infinitesimal flexes. This seems to be a new observation in the mathematical theory and answers a question posed in Remark 4.12 of Badri, Kitson and Power \cite{bad-kit-pow-2}. The converse direction is a well-known paradigm in crystallography. That is, crystal structure symmetries can lead to linearly localised modes, or even modes with finite support, and these modes are observable experimentally, or in simulations, as spectral lines or planes. We remark that a connection between linear structure in the RUM spectrum and the presence of certain \emph{free bases} of infinitesimal flexes is examined in \cite{bad-kit-pow-2}.

In Section \ref{s:linefigures} we use the terminology of \emph{line figures} to summarise the results in Section \ref{s:crystallographic} for crystal frameworks in $\bR^2$. In particular the line figure of the zero mode spectrum, denoted $LF({\bf K}(\C,\ul{a}))$, is the set of lines through the origin that are parallel to a line of ${\bf K}(\C,\ul{a})$.

Linear zero mode spectra are defined in Section \ref{s:aperiodic} in the setting of Delone bar-joint frameworks in the plane. The simplest of these is the \emph{slippage spectrum} ${\bf L}_{\rm slip}(\G,\ul{a})$. This is a subset of the reciprocal space $\bR^2_{\bf k}$, for the reference basis $\ul{a}$ in $\bR^2$, consisting of a set of lines through the origin associated with certain relatively dense sets of linearly localised flexes. The ``slippage" terminology reflects the fact that  these localised flexes are restrictions of translation velocity fields. A Penrose rhomb tiling has slippage spectrum consisting of 5 lines through the origin. More generally, the framework $\G_P$ of a general regular multigrid parallelogram tiling $P$ has slippage spectrum given by the reciprocal line figure $RF(P)^{\ul{a}}$ of the \emph{ribbon figure} $RF(P)$, where the ribbon figure records the finite number of linear directions of the parallelogram ribbons of $P$.
See Theorem \ref{t:slippageSpecandRibbonFig} and Example \ref{e:penroseslippage}. 
Also, in the periodic case, when $\G_P$ is periodic for $\ul{a}$, 
the slippage spectrum ${\bf L}_{\rm slip}(\G_P,\ul{a})$
coincides with line figure $LF({\bf K}(\G_P,\ul{a}))$  (Theorem \ref{t:slippageEqualsRUMforG_P}). 

For these identifications  we use the characterisation of infinitesimal flexes of parallelogram frameworks obtained
in our companion article \cite{pow-qc1}, where we also give an explicit formula for $RF(P)$ in terms of the tile geometry of $P$.

The \emph{limit spectrum} ${\bf L}_{\rm lim}(\G,\ul{a})$ is a larger line figure in reciprocal space, which is similarly defined but in terms of general approximating phase-periodic velocity fields without any translational restriction. For the parallelogram frameworks $\G_P$ it agrees with the slippage spectrum. On the other hand, for the crystallographic kagome framework $\C_{\rm kag}$ in $\bR^2$, with periodicity basis $\ul{a}$, the slippage spectrum is empty whereas ${\bf L}_{\rm lim}(\C_{\rm kag},\ul{a})$ is 
the union of 3 lines through the origin and agrees with ${\bf K}(\C_{\rm kag},\ul{a})$.

In the final section we give further commentary and indicate some natural directions for investigation.

\section{Zero mode spectra for crystals}\label{s:crystallographic}
The existence of a wave vector {\bf k} for a zero mode excitation of  crystallographic bar-joint framework $\C$ means that the oscillatory motion $p_i(t)$ of a joint $p_i$ is determined by the motion of the joints in some fixed base unit cell, associated with a periodicity basis $\ul{a}$ for $\C$, together with {\bf k} and the integral coordinates labelling the cell containing $p_i$. 
In fact such a real-valued zero mode corresponds to a phase-periodic complex-valued infinitesimal flex of the framework. Specifically, the real part of the infinitesimal flex is a velocity field on the joints giving the initial velocity of the motion of the joints \cite{pow-poly}. 
With this perspective, of linearisation and complexification, the \emph{(reduced) RUM spectrum} $\Omega(\C,\ul{a})$ of a crystallographic bar-joint framework (or \emph{crystal framework} for brevity) may be defined quite directly. It is 
the subset of the $d$-torus $\bT^d$ consisting of the multiphases $\omega=(\omega_1,\dots ,\omega_d)$
of infinitesimal flexes which are periodic  with respect to $\ul{a}$ modulo the \emph{multiphase factor} $\omega$. 
 
To indicate this explicitly, assume that $d=2$ and let $\C=(G,p)$ be a crystallographic bar-joint framework with a periodicity basis $\ul{a}=\{a_1, a_2\}$ and an associated labelling of the joints, 
\[
p(V)=\{p_{\kappa,(i,j)}: (i,j)\in \bZ^2, 1 \leq \kappa \leq n\},
\]
 so that $p_{\kappa,(i,j)}= p_{\kappa,(0,0)}+ia_1+ja_2$. 
Here $G=(V,E)$ is the underlying structure graph and $n$ is the number of orbits of the joints under translations from the lattice of vectors
determined by $\ul{a}$. Let
$\V(\C,\bC)$ be the vector space of complex-valued velocity fields on the set of joints which we may identify with the vector space of sequences  $u:\{1,\dots , n\} \times \bZ^2\to \bC^2$.  Then for each $\omega \in \bT^2$ there is a finite-dimensional subspace of velocity fields $u$ which are 
\emph{phase-periodic} (or periodic-modulo-phase, or $\omega$-periodic), in the sense that $u_{\kappa,(i,j)}= \omega_1^i\omega_2^ju_{\kappa,(0,0)}$ for all $\kappa, i, j$.

A complex \emph{infinitesimal flex} of a bar-joint framework $(G,p)$ in $\bR^2$ is a velocity field $u:p(V) \to \bC^2$ which satisfies the first order flex condition for every bar. This means that 
\[
\langle u(p(v))-u(p(w)), p(v)-p(w)\rangle=0, \quad \mbox{ for } vw \in E.
\]
For a crystal framework $\C$ these flexes are identified with a subspace $\F(\C,\bC) \subseteq \V(\C,\bC)$.
Let us refer to an infinitesimal flex of $\C$ as an \emph{infinitesimal flex mode (IFM)} for $\ul{a}$ if it is nonzero and is phase-periodic for some multiphase $\omega$ in $\bT^2$. In particular an IFM is bounded. The \emph{RUM spectrum} $\Omega(\C, \ul{a})$ is defined to be the set  of the multiphases $\omega$ for these IFMs. 

For general dimension $d$,
let $\omega=(\omega_1,\dots ,\omega_d) \in \bT^d, \omega_i=e^{2\pi i\gamma_i},$ and let $\omega^{k}$ be the product 
$\omega_1^{k_1}\cdots \omega_d^{k_d}$ for $k\in \bZ^d$.
For a crystal framework $\C$ in $\bR^d$ with periodicity basis $\ul{a}$, the (unreduced) \emph{zero mode wave vector spectrum} ${\bf K}(\C,\ul{a})$ is defined to be the set of \emph{wave vectors}  ${\bf k}=(\gamma_1, \dots , \gamma_d)$ such that
there is an IFM for $\omega$. This means that $u$ is phase-periodic for $\omega$ in the sense that for the shift isometries
\[
T_k: (x_1,\dots , x_d) \to (x_1,\dots , x_d) + (k_1a_1+\dots + k_da_d), \quad k\in \bZ^d,
\]
we have 
$
u(T_k(p_j))= \omega^{k}
u(p_j)
$ for each joint $p_j$. 
The wave vectors ${(\gamma_1, \dots , \gamma_d)}$ are viewed as  elements of the space $\bR^d_{\bf k}$, with its standard basis, $\ul{b}=\{b_1,\dots ,b_d\}$ and so 
${(\gamma_1, \dots , \gamma_d)}$ is identical to the vector ${\gamma_1b_1+ \dots + \gamma_db_d}$ in $\bR^d_{\bf k}$. We refer to  $\bR^d_{\bf k}$ as the \emph{reciprocal space for} $\ul{a}$ since
this terminology conforms with the usual usage in crystallography. That is, the basis satisfies the identities $\langle a_i,b_j\rangle =\delta_{ij}$.  In particular it may also be considered as as the usual dual vector space of the coefficient space of vectors $(s_1, \dots , s_d)$ for the basis $\ul{a}$.

The image of ${\bf K}(\C,\ul{a})$ in $[0,1)^d$ under the quotient map $\bR^d_{\bf k}\to \bR^d_{\bf k}/\bZ^d$
is the set of \emph{reduced} wave vectors, and this is the convenient wave vector form, or logarithmic form, of the RUM spectrum 
$\Omega(\C,\ul{a})$ 
used by crystallographers which we may denote as $\Omega^{\rm log}(\C,\ul{a})$. From the definitions it is evident that ${\bf K}(\C,\ul{a})$ is the periodic extension of $\Omega^{\rm log}(\C,\ul{a})$. 
Although there is a complete equivalence between the wave vector and multiphase formalism it is conceptually convenient to consider both forms. Also, in Section \ref{s:aperiodic} we consider variant reduced and unreduced spectra for aperiodic frameworks.

The RUM multiphases $\omega$ are given as the solutions of a set of multivariable polynomial equations. Thus
$\Omega(\C, \ul{a})$ is a compact subset of $\bT^d$ and is also a real algebraic set in its wave vector representation in $[0,1)^d$.  In dimension $d$ there are therefore $d+1$ possible values for the topological or Hausdorff dimension of  $\Omega(\C, \ul{a})$. In view of Proposition \ref{p:surjection} below this value is independent of the choice of periodicity basis and we refer to it as the \emph{RUM dimension}, $\dim_{\rm rum}(\C)$, of $\C$
\cite{owe-pow-crystal}, \cite{pow-poly}. 
A gallery of examples with RUM spectra of different dimension is given in Badri, Kitson and Power \cite{bad-kit-pow-1}. Note that an infinitesimal translation velocity field is an IFM for 
$\ul{1}=(1,1,\dots ,1)$ and so the origin is present in any RUM spectrum $\Omega(\C, \ul{a})$.

For $k=(k_1, \dots , k_d)$ in $\bZ^d$  let $k\cdot\ul{a}$ be the periodicity basis
$\{k_1a_1,\dots , k_da_d\}$. Then there is a natural map  ${\bf K}(\C, \ul{a}) \to {\bf K}(\C, k\cdot\ul{a})$ 
given by 
\[
(\gamma_1,\dots ,\gamma_d)
\to (k_1\gamma_1,\dots , k_d\gamma_d)
\]
and a corresponding map for the RUM spectrum quotients.
This follows since an infinitesimal flex mode $u$, for the pair $(\omega, \ul{a})$, is an infinitesimal flex mode for the pair $(\omega^{(k)}, k\cdot\ul{a})$ where 
$\omega^{(k)} = (\omega_1^{k_1},\dots , \omega_d^{k_d}).$

\begin{prop}\label{p:surjection} The maps  ${\bf K}(\C, \ul{a}) \to {\bf K}(\C, k\cdot\ul{a})$ and  $\Omega(\C, \ul{a}) \to \Omega(\C, k\cdot\ul{a})$
 are surjections.
\end{prop}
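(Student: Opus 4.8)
The plan is to split the finite-dimensional space of $\mu$-periodic infinitesimal flexes for the coarse basis $k\cdot\ul{a}$ into the spaces of $\omega$-periodic flexes for $\ul{a}$, summed over the finitely many multiphases $\omega$ lying above $\mu$; surjectivity then follows because a nonzero summand must occur. Write $\Lambda=\bZ a_1+\dots+\bZ a_d$ and $\Lambda_k=\bZ k_1a_1+\dots+\bZ k_da_d$ (assuming each $k_i\neq 0$, so that $k\cdot\ul{a}$ really is a basis), a sublattice of index $N=|k_1\cdots k_d|$, and let $\Gamma=\Lambda/\Lambda_k$, a finite abelian group of order $N$. For $\lambda=\sum_i m_ia_i\in\Lambda$ and $\omega\in\bT^d$ put $\omega^{\lambda}=\prod_i\omega_i^{m_i}$, and let $\sigma_\lambda$ denote the translation operator $(\sigma_\lambda u)(p)=u(p+\lambda)$ on velocity fields. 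Since $\C$ is $\Lambda$-periodic, each $\sigma_\lambda$ maps $\F(\C,\bC)$ bijectively onto itself. A field $u$ is $\omega$-periodic for $\ul{a}$ iff $\sigma_\lambda u=\omega^{\lambda}u$ for all $\lambda\in\Lambda$, and $\mu$-periodic for $k\cdot\ul{a}$ iff $\sigma_\lambda u=\mu^{\lambda}u$ for all $\lambda\in\Lambda_k$; let $\F_\omega(\C,\ul{a})$ and $\F_\mu(\C,k\cdot\ul{a})$ denote the corresponding spaces of infinitesimal flexes. Put $R_\mu=\{\omega\in\bT^d:\omega_i^{k_i}=\mu_i\text{ for all }i\}$; this set has exactly $N$ elements and is exactly the preimage of $\mu$ under $\omega\mapsto\omega^{(k)}=(\omega_1^{k_1},\dots,\omega_d^{k_d})$, and for $\omega\in R_\mu$ one has $\omega^\lambda=\mu^\lambda$ on $\Lambda_k$, so $\F_\omega(\C,\ul{a})\subseteq\F_\mu(\C,k\cdot\ul{a})$.

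The heart of the argument is the direct sum decomposition
\[
\F_\mu(\C,k\cdot\ul{a})\;=\;\bigoplus_{\omega\in R_\mu}\F_\omega(\C,\ul{a}).
\]
Since $|\mu^\lambda|=1$, each $\sigma_\lambda$ ($\lambda\in\Lambda$) permutes the $\mu$-periodic fields, hence restricts to an action of $\Gamma$ on $\F_\mu(\C,k\cdot\ul{a})$. On this space set, for $\omega\in R_\mu$,
\[
P_\omega\;=\;\frac1N\sum_{[\lambda]\in\Gamma}\overline{\omega^{\lambda}}\;\sigma_\lambda,
\]
which is well defined on cosets because $\omega^\lambda=\mu^\lambda$ and $|\mu^\lambda|=1$ on $\Lambda_k$. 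Orthogonality of the characters of $\Gamma$ gives $P_\omega P_{\omega'}=\delta_{\omega\omega'}P_\omega$ and $\sum_{\omega\in R_\mu}P_\omega=I$; a short computation gives $P_\omega\F_\mu(\C,k\cdot\ul{a})\subseteq\F_\omega(\C,\ul{a})$, and $P_\omega$ restricts to the identity on $\F_\omega(\C,\ul{a})$. These four statements yield the displayed direct sum (with the dimension identity $\dim\F_\mu(\C,k\cdot\ul{a})=\sum_{\omega\in R_\mu}\dim\F_\omega(\C,\ul{a})$ as a consistency check). The one substantive point is that each $P_\omega$ sends flexes to flexes; this holds because $P_\omega$ is a linear combination of the $\sigma_\lambda$ and each $\sigma_\lambda$ preserves $\F(\C,\bC)$ by $\Lambda$-periodicity of $\C$, i.e.\ a lattice translate of an infinitesimal flex of a periodic framework is again an infinitesimal flex.

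Surjectivity is then immediate. If $\mu\in\Omega(\C,k\cdot\ul{a})$ then $\F_\mu(\C,k\cdot\ul{a})\neq\{0\}$, so some summand $\F_\omega(\C,\ul{a})$ with $\omega\in R_\mu$ is nonzero; this $\omega$ lies in $\Omega(\C,\ul{a})$ and satisfies $\omega^{(k)}=\mu$, which says precisely that $\Omega(\C,\ul{a})\to\Omega(\C,k\cdot\ul{a})$ is onto. The statement for the wave-vector spectra follows by the same reasoning transported along the correspondence $(\gamma_1,\dots,\gamma_d)\leftrightarrow(e^{2\pi i\gamma_1},\dots,e^{2\pi i\gamma_d})$: a point of ${\bf K}(\C,k\cdot\ul{a})$ corresponds to a multiphase $\mu$ realised by some flex in $\F_\mu(\C,k\cdot\ul{a})$, the decomposition supplies an $\omega\in R_\mu\cap\Omega(\C,\ul{a})$, and a logarithm of $\omega$ in the reciprocal space for $\ul{a}$ furnishes a preimage in ${\bf K}(\C,\ul{a})$ under $(\gamma_i)\mapsto(k_i\gamma_i)$. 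The main obstacle I anticipate lies entirely in the second paragraph: checking that the averaging operators $P_\omega$ are well defined on $\Gamma$, establishing the idempotent and completeness identities, and—above all—verifying that $P_\omega$ preserves the flex space; everything after the decomposition is routine.
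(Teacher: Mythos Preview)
Your proof is correct and is precisely the elementary representation-theory argument the paper invokes: the paper gives no details beyond observing that one must find a $k_i$-th root $\omega_i$ of each $\eta_i$ supporting an IFM and stating that ``this is a consequence of elementary representation theory for finite abelian groups,'' citing an appendix elsewhere. You have simply written out that argument in full---the isotypical decomposition of $\F_\mu(\C,k\cdot\ul{a})$ under the $\Gamma=\Lambda/\Lambda_k$ action via the character projections $P_\omega$---so the approaches coincide.
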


To see this one must show that if $u'$ is an IFM for the pair $(\eta, k\cdot\ul{a})$ then there exist a choice of complex roots $\omega_i = \eta_i^{1/k_i}$, for $ 1\leq i \leq d$, such that there is an IFM $u$ with multiphase $\omega=(\omega_1, \dots , \omega_d)$ for the periodicity basis $\ul{a}$. This is a consequence of elementary representation theory for finite abelian groups. (See the appendix of \cite{pow-poly}.)
\medskip
 
Of particular interest is the identification of the RUM spectrum when $\C$ is a \emph{Maxwell framework} in $\bR^d$, that is, one for which the average coordination (valency of the joints) is equal to $2d$. 
Two basic examples in 2 dimensions are the square grid framework, which we denote as $\C_{\bZ^2}$, and the well-known kagome framework $\C_{\rm kag}$. It can be shown that for any periodicity basis $\ul{a}$ the wave vector spectrum ${\bf K}(\C_{\bZ^2}, \ul{a})$ (resp. ${\bf K}(\C_{\rm kag}, \ul{a})$) is the union of periodic extensions of 2 (resp. 3) straight lines through the origin together with their integral translates and so these frameworks have RUM dimension 1. 

For the case of maximal RUM dimension, dimension $d$, we have the following characterisation. A \emph{local infinitesimal flex} is one which is finitely supported.

\begin{thm}\label{t:orderN}\cite{pow-poly}
The following properties are equivalent for a crystal framework in $\bR^d$ with periodicity basis $\ul{a}$.

(i) $\C$ has a local infinitesimal flex.

(ii) $\Omega(\C, \ul{a}) = \bT^d$.

(iii) $\Omega(\C, \ul{a})$ has dimension $d$.
\end{thm}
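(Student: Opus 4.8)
The plan is to reduce everything to the \emph{symbol matrix} of $\C$ with respect to $\ul a$. Choose orbit representatives $p_{1,0},\dots,p_{n,0}$ for the joints and $e_1,\dots,e_m$ for the bars, where $e_r$ joins $p_{\kappa(r),0}$ to $p_{\lambda(r),\ell_r}$ across a cell displacement $\ell_r\in\bZ^d$ and has bar vector $v_r\in\bR^d$. Let $\Phi(z)$, $z=(z_1,\dots,z_d)$, be the $m\times dn$ matrix whose $r$-th row carries $v_r$ in the $\kappa(r)$-block and $-z^{\ell_r}v_r$ in the $\lambda(r)$-block (multi-index notation $z^{\ell}=z_1^{\ell_1}\cdots z_d^{\ell_d}$); its entries lie in the Laurent polynomial ring $A=\bC[z_1^{\pm1},\dots,z_d^{\pm1}]$. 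Reading the first-order flex conditions orbit by orbit gives two dictionaries. For $\omega\in\bT^d$ the space of $\omega$-periodic infinitesimal flexes is naturally identified with $\ker\Phi(\omega)\subseteq\bC^{dn}$, so that
\[
\Omega(\C,\ul a)=\{\omega\in\bT^d:\rank\Phi(\omega)<dn\};
\]
and a finitely supported velocity field, recorded orbit by orbit as a tuple $w(z)\in A^{dn}$ of $\bC^d$-valued Laurent polynomials whose coefficients are the velocities, is an infinitesimal flex precisely when $\Phi(z)w(z)=0$ in $A^m$ (with the matching sign convention), so the local flexes of $\C$ are the nonzero elements of $\ker(\Phi:A^{dn}\to A^m)$.

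Two standard facts will be used. First, a Laurent polynomial vanishing on all of $\bT^d$ is identically zero (restrict to the angular torus and equate Fourier coefficients). Second, $\Omega(\C,\ul a)$ is the common zero set in $\bT^d$ of the $dn\times dn$ minors of $\Phi$, which are real-analytic functions of the angles, so that, as a real-analytic subset of the connected $d$-manifold $\bT^d$, it is either all of $\bT^d$ or a proper real-analytic subset and hence of dimension at most $d-1$.

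Now the four implications. \textbf{(i)$\Rightarrow$(ii):} given a nonzero local flex $w(z)\in A^{dn}$, evaluation at $\omega\in\bT^d$ gives $w(\omega)\in\ker\Phi(\omega)$ (after the automorphism $z\mapsto z^{-1}$ of $A$, which fixes $\bT^d$), hence an $\omega$-periodic flex; this flex is nonzero unless $\omega$ lies in the common zero set $Z$ of the scalar coordinates of $w$, and since $w\neq0$ one such coordinate is a nonzero Laurent polynomial, whose zero set in $\bT^d$ has empty interior; thus $\Omega(\C,\ul a)\supseteq\bT^d\setminus Z$ is dense, and, being closed, $\Omega(\C,\ul a)=\bT^d$. \textbf{(ii)$\Rightarrow$(i):} now $\rank\Phi(\omega)<dn$ for every $\omega\in\bT^d$, so every $dn\times dn$ minor of $\Phi$ vanishes on $\bT^d$, hence by the first fact is $0$ in $A$; equivalently $\Phi$ has rank $<dn$ over the fraction field $\bC(z_1,\dots,z_d)$, so there is a nonzero $\xi(z)\in\bC(z)^{dn}$ with $\Phi(z)\xi(z)=0$, and clearing denominators yields a nonzero $w(z)\in A^{dn}$ with $\Phi(z)w(z)=0$, i.e.\ a nonzero local flex. \textbf{(ii)$\Rightarrow$(iii)} is immediate since $\dim\bT^d=d$, and \textbf{(iii)$\Rightarrow$(ii)} is the second fact, since dimension $d$ excludes the proper-subvariety alternative.

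The main obstacle is not any of these short steps but the set-up underlying them: establishing the two dictionaries, and in particular that $\ker\Phi(\omega)$ is exactly the space of $\omega$-periodic flexes (bookkeeping with the displacements $\ell_r$ and the sign convention), together with the key passage packaged in (ii)$\Rightarrow$(i) --- that rank deficiency of $\Phi$ \emph{at every point of the compact torus} $\bT^d$ already forces rank deficiency over the rational function field, which rests on $\bT^d$ being a uniqueness set for Laurent polynomials. With these in hand the theorem follows formally; alternatively one may simply invoke the argument of \cite{pow-poly}.
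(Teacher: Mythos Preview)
Your proof is correct and takes a genuinely different route from the paper's own argument. For (i)$\Rightarrow$(ii) the paper gives a direct construction: from a local flex $z$ it forms $u=\sum_{k\in\bZ^d}\omega^{-k}T_kz$, which is a well-defined infinitesimal flex and is $\omega$-periodic (the paper tacitly assumes $u\neq0$; in fact $u$ can vanish on a proper subvariety of $\bT^d$, and then closedness of $\Omega$ finishes the argument, exactly as in your density step). For the harder direction (ii)$\Rightarrow$(i) the paper defers to Lemma~\ref{l:bandconverseOrderN}, whose proof is a dimension count: the $m^d$ IFMs at the rational multiphases $(k_1/m,\dots,k_d/m)$ span an $m^d$-dimensional space of $m\ul a$-periodic flexes, while the constraint of vanishing on the joints of bars overlapping the boundary of the $m$-supercell has dimension of order $m^{d-1}$; for large $m$ some nonzero combination therefore vanishes on those joints and restricts to a local flex. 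Your approach instead sets up the symbol matrix $\Phi(z)$ over the Laurent ring and turns both directions into rank statements, the crux being that $\bT^d$ is a uniqueness set for Laurent polynomials, so pointwise rank deficiency on all of $\bT^d$ forces rank deficiency over $\bC(z)$. This is essentially the argument of the original reference \cite{pow-poly} and is algebraically cleaner for this particular equivalence; the paper's elementary dimension count avoids the transfer-function machinery and, more importantly here, is the template reused for the paper's new contributions, Lemmas~\ref{l:bandconverse}--\ref{l:bandconverseOrderN}, where a line or affine subspace in ${\bf K}(\C,\ul a)$ is converted into a localised flex by the same overcounting trick.
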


\begin{proof}
 The equivalence of (ii) and (iii) holds since the RUM spectrum is a real algebraic variety in $\bR^{2d}$.
To see that (i) implies (ii) let $z$ be a local infinitesimal flex of $\C$, where $\ul{a}$ is a periodicity basis for $\C$. We must construct, for any given multiphase $\omega$, an IFM $u$ for the pair $(\ul{a}, \omega)$. 
Consider the translated flexes $T_kz$, for $k\in \bZ^d$, defined by $T_kz(p_i)= z(T_{-k}(p_i))$. Note that there is an upper bound, $M$ say, such that for any joint $p_i$  the number of the translated flexes which have  $p_i$ in their supports is no greater than $M$. Thus we may define the velocity field 
\[
u= \sum_{k\in \bZ^d} \omega^{-k} T_kz
\]
and this is also an infinitesimal flex. Moreover,
it is an IFM for  $(\ul{a}, \omega)$. The converse direction, (ii) implies (i), is more involved and follows from Lemma \ref{l:bandconverseOrderN}.\end{proof}

\subsection{Localised flexes imply lines of wave vectors}\label{ss:bandlimited}\label{s:bandlimited}
Let $H$ be a line in $\bR^2$ through the origin.
A subset of joints of a bar-joint framework $\G$ in $\bR^2$ is  \emph{$H$-localised} if there is an upper bound to their distance from $H$. A velocity field or infinitesimal flex of $\G$ is \emph{linearly localised} if its support is {$H$-localised} for some line $H$.

Let $\C$ be a crystal framework in $\bR^2$ with
periodicity basis $\ul{a}= \{a_1, a_2\}$.
A line $H$ through the origin is said to be \emph{rational} for $\C$ if 
it is parallel to some vector $j_1a_1+j_2a_2$, where $j_1, j_2$ are integers. This is a well-defined notion since any
two periodicity bases are equivalent by an invertible matrix with rational entries.  

An $H$-localised infinitesimal flex $u_{\rm loc}$, for a rational line $H$ for $\C$, is said to be \emph{periodic}, or \emph{rationally periodic}, if it is periodic relative to some integral direction vector $j_1a_1+j_2a_2$ for $H$. In particular, $H= \bR(j_1a_1+j_2a_2)$. Also, $u_{\rm loc}$ is  \emph{phase-periodic} if it is periodic up to a multiplicative unimodular factor $\lambda$.
 
\begin{prop}\label{p:dualline}
 Let $\C$ be a crystal framework in $\bR^2$ with periodicity basis $\ul{a}=\{a_1, a_2\}$ and let $z$ be a nonzero $H$-localised infinitesimal flex for the rational line $H=\bR a_1$ which is phase-periodic, with respect to translation by $a_1$,  with unimodular phase factor $\lambda_1=e^{2\pi i\gamma_1}$. Then the zero mode spectrum {\bf K}$(\C,\ul{a})$  contains the line $\{(\gamma_1, t):t\in \bR\}$.
\end{prop}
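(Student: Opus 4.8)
The plan is to build, from the single $H$-localised phase-periodic flex $z$, a one-parameter family of infinitesimal flex modes whose multiphases trace out the claimed line. Write $z$ for the flex, supported within bounded distance of $H = \bR a_1$, satisfying $z(T_{(1,0)}(p_i)) = \lambda_1 z(p_i)$ for every joint $p_i$, where $\lambda_1 = e^{2\pi i \gamma_1}$. The idea is that $z$ is already "$\omega_1$-periodic" in the $a_1$-direction with $\omega_1 = \lambda_1$; what we lack is any periodicity in the $a_2$-direction, and the point of linear localisation is that we can manufacture it freely by superposing vertical translates with an arbitrary phase. So fix $t \in \bR$, set $\lambda_2 = e^{2\pi i t}$, and define
\[
u_t \;=\; \sum_{m\in\bZ} \lambda_2^{-m}\, T_{(0,m)} z,
\]
where $(T_{(0,m)}z)(p_i) = z(T_{(0,-m)}(p_i))$ as in the proof of Theorem~\ref{t:orderN}.

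The key steps, in order. First, convergence and local finiteness: because $z$ is $H$-localised there is a bound $M$ on the number of translates $T_{(0,m)}z$ whose support meets any given joint (each vertical translate moves the support off $H$ by roughly $m$ times the $a_2$-component transverse to $H$, so only boundedly many $m$ keep a fixed joint in range); hence the sum defining $u_t(p_i)$ has at most $M$ nonzero terms and $u_t$ is a well-defined velocity field. Second, $u_t$ is an infinitesimal flex: each $T_{(0,m)}z$ is a flex (translation of a flex of a framework periodic for $\ul a$), and a locally finite linear combination of flexes is a flex, since the bar equations are linear and involve only finitely many joints each. Third, $u_t$ is phase-periodic with multiphase $\omega = (\lambda_1, \lambda_2)$: applying $T_{(0,1)}$ reindexes the sum and pulls out $\lambda_2$, so $u_t(T_{(0,1)}p_i) = \lambda_2\, u_t(p_i)$; and applying $T_{(1,0)}$ commutes with every $T_{(0,m)}$ and acts on each $z$-summand by $\lambda_1$, so $u_t(T_{(1,0)}p_i) = \lambda_1\, u_t(p_i)$. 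Fourth, $u_t$ is nonzero: restrict attention to a joint $p_i$ lying on (or nearest to) the line $H$ that is in the support of $z$; among the vertical translates meeting $p_i$, these are translates of $z$ along $a_2$, and one checks that the transverse offsets are distinct, so the $M$ contributing terms are evaluations of $z$ at genuinely different joints — more robustly, choose a joint whose transverse coordinate is extremal among the support of $z$, so that exactly one summand ($m=0$) is nonzero there and $u_t(p_i) = z(p_i) \neq 0$. Therefore $u_t$ is an IFM for $\omega = (e^{2\pi i\gamma_1}, e^{2\pi i t})$, which by definition of ${\bf K}(\C,\ul a)$ puts the wave vector $(\gamma_1, t)$ in the spectrum; letting $t$ range over $\bR$ gives the whole line.

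The main obstacle is the nonvanishing step, Step~four: one must be sure the superposition does not collapse to zero through cancellation. The clean way around this is the extremal-joint argument — pick a joint in $\supp z$ whose signed distance from $H$ is maximal (such a joint exists because $\supp z$, while possibly infinite along $H$, is $H$-localised and the distances take values in a discrete set determined by $p(V)$ and finitely many $\bZ^2$-orbits, or one simply takes a supremum and picks a joint attaining a value within the top "layer"). At that joint only the $m=0$ term survives, so $u_t(p_i) = z(p_i)\neq 0$ independently of $t$. A secondary technical point to record carefully is the uniform bound $M$ in Step~one: it comes from the $H$-localisation of $\supp z$ together with the fact that $a_2$ is not parallel to $H = \bR a_1$, so translation by $a_2$ strictly increases transverse distance, forcing the transverse "width" of $\supp z$ to be covered by boundedly many translates. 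Once these two points are nailed down the remaining verifications (flex condition, phase-periodicity) are routine linearity-and-reindexing arguments.
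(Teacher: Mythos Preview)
Your construction is exactly the paper's: form $u=\sum_{k_2}\lambda_2^{-k_2}T_{(0,k_2)}z$, note it is well-defined because $z$ is $H$-localised, check it is a flex and is $(\lambda_1,\lambda_2)$-phase-periodic, and let $\lambda_2$ run over $\bT$. The paper is in fact terser than you and does not pause over the nonvanishing of $u$ at all.

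One point to tighten in your Step four: the extremal-joint argument does not give what you claim. If $p_i$ has maximal signed transverse coordinate among joints of $\supp z$, that kills the summands $z(p_i-ma_2)$ with $p_i-ma_2$ lying on the far side of $p_i$ from $H$, but not those lying on the near side; several terms can survive and may cancel for isolated values of $\lambda_2$ (e.g.\ if $z$ is supported on two adjacent rows with opposite signs, then $u_t$ vanishes identically at $\lambda_2=1$). The easy repair: for any joint $p$ with $z(p)\neq 0$, the finite sum $u_t(p)=\sum_m\lambda_2^{-m}z(p-ma_2)$ is a Laurent polynomial in $\lambda_2$ whose $\lambda_2^{0}$-coefficient is $z(p)\neq 0$, so it vanishes for at most finitely many $\lambda_2\in\bT$. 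Thus $u_t$ is a genuine IFM for all but finitely many $\lambda_2$, and since $\Omega(\C,\ul a)$ is closed in $\bT^2$ the missing points are recovered, giving the full line in ${\bf K}(\C,\ul a)$.
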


\begin{proof} As in earlier notation, let $T_k, k \in \bZ^2$, be the translation group for $\ul{a}$.
For each joint $p_j$ we have $z(T_{(k_1,0)}p_j)=\lambda_1^{k_1} z(p_j)$.
For $\lambda_2 \in \bT$ the velocity field
\[
u= \sum_{k_2\in \bZ} \lambda_2^{-k_2} T_{(0,k_2)}z
\]
is well-defined, since $z$ being $H$-localised ensures that each joint lies in the support of only finitely many of the velocity fields $T_{(0,k_2)}z$, for $k_2$ in $\bZ$. In particular $u$ is an infinitesimal flex. Moreover it is an IFM for the multiphase $(\lambda_1,\lambda_2)$ relative to the periodicity basis $\ul{a}$. Since $\lambda_2=e^{2\pi it}$ is arbitrary, {\bf K}$(\C,\ul{a})$ contains $\{(\gamma_1,t), t \in \bR\}$.
\end{proof}

From a geometric point of view note that the spectral line in the previous proposition is the line orthogonal to the line $s_2=0$ in the (new) coefficient Euclidean space of pairs $(s_1, s_2)$ representing points $s_1a_1+s_2a_2$ in the ambient Euclidean space.

Let $\ul{a}$ be a periodicity basis for $\C$ in $\bR^2$ as before and consider now the line $H=\bR a_1'$ in the ambient space for $\C$ given by a rational vector $a_1'$. Suppose moreover that $u_{\rm loc}$ is a nonzero $H$-localised phase-periodic flex with phase-factor $e^{2\pi i \gamma_1'}$ for translation by  $a_1'$ in $\bR^2$. 
Then $u_{\rm loc}$ determines a line $L'$ in {\bf K}$(\C,\ul{a})$.
The line  $L$ parallel to $L'$ and containing the origin is denoted
$H^{\ul{a}}$, and is not dependent on the phase factor.  
In general, if $a_1'=\alpha_1a_1 + \beta_1a_2$ then in the coefficient space for $\ul{a}$ the line $\bR a_1'$ is represented as $\bR(\alpha_1, \beta_1)$ and the spectral line $H^{\ul{a}}$ is the orthogonal line $\bR (\beta_1, -\alpha_1)$.

With this notation we have the following corollary. In the next section we see that it has a converse. 
 
\begin{cor}\label{c:dualline}
Let $\C$ be a crystal framework in $\bR^2$ with periodicity basis $\ul{a}$ and let
 $H$ be  a rational line through the origin for which there exists a nonzero $H$-localised phase-periodic infinitesimal flex.
Then {\bf K}$(\C,\ul{a})$ contains a line parallel to 
$H^{\ul{a}}$.
\end{cor}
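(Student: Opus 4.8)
The plan is to reduce Corollary \ref{c:dualline} to Proposition \ref{p:dualline} by a change of periodicity basis. Given the rational line $H = \bR a_1'$ through the origin, with $a_1'$ a rational vector for $\ul{a}$, I would first normalise $a_1'$ so that it is a primitive integral vector: writing $a_1' = \alpha_1 a_1 + \beta_1 a_2$ with $\alpha_1, \beta_1 \in \bQ$, clear denominators and divide by the gcd to obtain a primitive integral vector $c_1 = m_1 a_1 + m_2 a_2$ with $\gcd(m_1,m_2)=1$ and $\bR c_1 = H$. Since $(m_1,m_2)$ is a primitive pair, there exist integers $m_1', m_2'$ with $m_1 m_2' - m_2 m_1' = 1$, so $c_2 = m_1' a_1 + m_2' a_2$ together with $c_1$ forms a new periodicity basis $\ul{a}' = \{c_1, c_2\}$ for $\C$ related to $\ul{a}$ by a matrix in $GL_2(\bZ)$.

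Next I would transport the hypothesis to this basis: the given nonzero $H$-localised phase-periodic flex $u_{\rm loc}$ is periodic up to a unimodular factor for translation by $a_1'$, hence also for translation by the integral multiple $c_1$ of $a_1'$ (raising the phase factor to the corresponding power), with some unimodular phase $\lambda_1 = e^{2\pi i \gamma_1}$. Thus $u_{\rm loc}$ is a nonzero $H$-localised phase-periodic flex with respect to translation by the first vector $c_1$ of the periodicity basis $\ul{a}'$, and $H = \bR c_1$. Proposition \ref{p:dualline}, applied with periodicity basis $\ul{a}'$ in place of $\ul{a}$, then shows that ${\bf K}(\C,\ul{a}')$ contains the line $\{(\gamma_1, t) : t \in \bR\}$ in the reciprocal space for $\ul{a}'$ — geometrically, a line parallel to the reciprocal axis dual to $c_1$, which in the coefficient space for $\ul{a}'$ is orthogonal to the $c_1$-axis.

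Finally I would convert back to the reciprocal space for $\ul{a}$. A change of periodicity basis by a matrix $S \in GL_2(\bZ)$ induces the corresponding invertible linear map on reciprocal space (the contragredient, $(S^{-1})^{\mathrm{T}}$), which carries ${\bf K}(\C,\ul{a}')$ bijectively onto ${\bf K}(\C,\ul{a})$ and carries lines to lines; under this map the line produced above goes to a line of ${\bf K}(\C,\ul{a})$ whose direction is, by the computation recorded just before the corollary, exactly the direction $\bR(\beta_1,-\alpha_1)$ of $H^{\ul{a}}$. Shifting by the appropriate integer translate if necessary (the spectrum is invariant under $\bZ^2$-translation, being the periodic extension of $\Omega^{\log}$) gives a line of ${\bf K}(\C,\ul{a})$ through a point that is parallel to $H^{\ul{a}}$, as required; indeed one may as well note the direction alone is what is asserted.

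The main obstacle is bookkeeping rather than a genuine difficulty: one must keep straight how the linear-algebraic data (primitive integral direction vector, completion to a $GL_2(\bZ)$-basis, the induced contragredient action on reciprocal space) interact, and in particular verify that the direction of the spectral line is basis-independent — that is, that $H^{\ul{a}}$ is well-defined — which is precisely the content of the remark following Proposition \ref{p:dualline} together with the surjectivity/compatibility statement of Proposition \ref{p:surjection}. Care is also needed that the phase factor, though it changes under the rescaling $a_1' \rightsquigarrow c_1$, never affects the \emph{direction} of the resulting line, only its intercept.
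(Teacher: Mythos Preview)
Your approach is the paper's own: the corollary is presented there as an immediate consequence of Proposition~\ref{p:dualline} together with the change-of-basis discussion in the paragraph preceding it, and you have simply spelled out that reduction in detail.

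One bookkeeping slip to flag. You write that $c_1$ is ``the integral multiple of $a_1'$'', but by the paper's definition a phase-periodic $H$-localised flex is periodic-up-to-phase with respect to an \emph{integral} direction vector $a_1' = j_1 a_1 + j_2 a_2$; the primitive vector $c_1$ is then $a_1'/\gcd(j_1,j_2)$, so it is $a_1'$ that is an integral multiple of $c_1$, not conversely. Phase-periodicity for translation by $a_1'$ does \emph{not} in general descend to phase-periodicity for translation by $c_1$. The easy fix is to forgo primitivity and work directly with the periodicity basis $\{a_1', a_2''\}$ for any integral $a_2''$ independent of $a_1'$ (this generates a sublattice, which is still a periodicity lattice for $\C$), apply Proposition~\ref{p:dualline} there, and transfer the resulting line back to ${\bf K}(\C,\ul{a})$; the surjection of Proposition~\ref{p:surjection} then handles the passage between sublattice and lattice bases. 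A second, smaller point: in the paper's convention (Lemma~\ref{l:changebasisformula}(i)) the induced map on wave-vector spectra is multiplication by the change-of-basis matrix $Z$ itself, not its contragredient --- though as you say this affects only bookkeeping, not the direction of the line.
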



Let us also note that in two dimensions the wave vector spectrum {\bf K}$(\C,\ul{a})$ contains a line with nonrational gradient only in the extreme case that it is equal to $\bR^2$. 
Indeed, in this irrational case the reduced RUM spectrum is dense in $[0,1)^2$. Since it is closed set in $[0,1)^2$, by the compactness of $\Omega(\C,\ul{a})$, it is equal to $[0,1)^2$, and so {\bf K}$(\C,a) = \bR^2$.



\begin{example}\label{e:unboundedflex}  A crystal framework  may have a linearly localised infinitesimal flex and yet have a trivial RUM spectrum. 
For example, Figure \ref{f:unboundedonly2Dcrystal} indicates a crystal framework $\C_{\bZ^2}^{++}$ which is a double augmentation of the grid framework $\C_{\bZ^2}$ whose joints have integer coordinates. 
\begin{center}
\begin{figure}[ht]
\centering
\includegraphics[width=8cm]{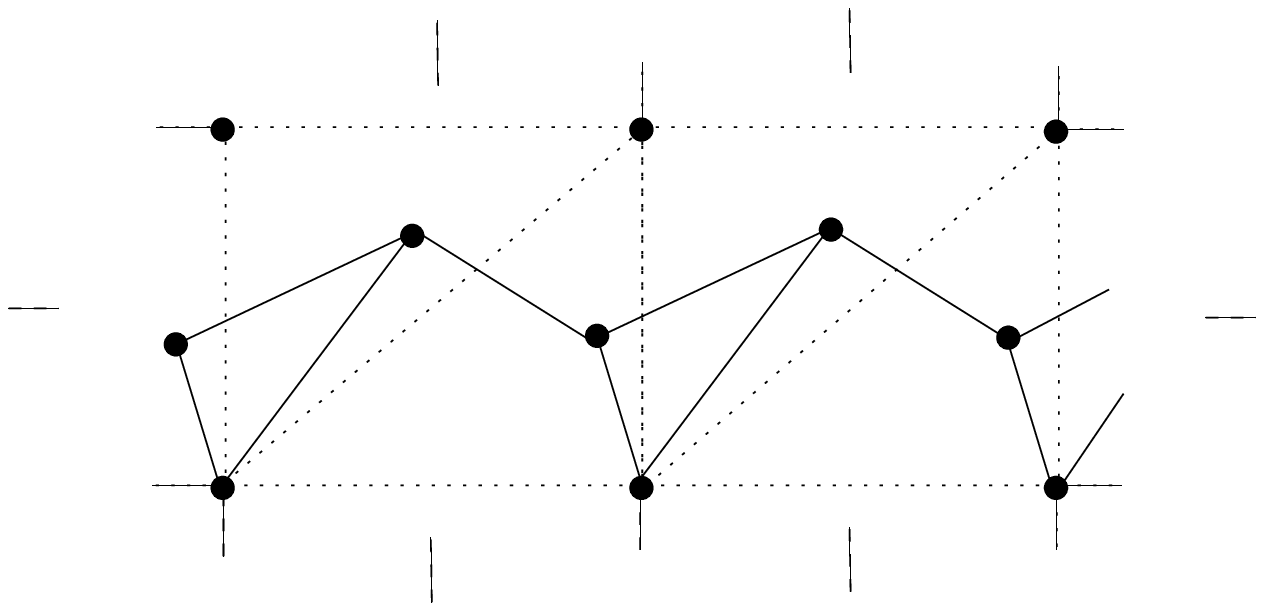}
\caption{The infinitesimally flexible crystal framework $\C_{\bZ^2}^{++}$ in $\bR^2$ with trivial RUM spectrum $\{(0,0)\}$ in $[0,1)^2$.}
\label{f:unboundedonly2Dcrystal}
\end{figure}
\end{center}
In the first augmentation diagonal bars have been added creating an infinitesimally rigid framework $\C_{\bZ^2}^+$. Then each row of joints of $\C_{\bZ^2}$ has been augmented by pairwise connected triangle frameworks. The resulting crystal framework has nonrigid motion infinitesimal flexes but they are necessarily unbounded.
Indeed, each horizontal chain of added joints is the support set of nonzero linearly localised infinitesimal flexes where the individual  velocity vectors increase geometrically in magnitude in the positive $x$-direction.
\end{example}

\begin{rem}\label{e:kastispower}
In Kastis and Power \cite{kas-pow-synthesis}, \cite{kas-pow-flexibility} we use techniques from algebraic spectral synthesis to obtained a complete characterisation of the infinitesimal rigidity of a crystal framework $\C$ in  terms of a \emph{geometric spectrum} $\Gamma(\C,a)$ in $\bC^d\backslash \{0\}$. This spectrum accommodates  modes and flexes with geometric growth, such as those existing  in our Example \ref{e:unboundedflex}, and it extends the RUM spectrum in $\bT^d$. It is potentially significant for the identification of bounded modes of semi-infinite crystals relative to a free surface of $\C$. See also, Power \cite{pow-seville}. On the other hand further spectral synthesis methods are required to characterise when there are no proper \emph{bounded} infinitesimal flexes of $\C$ beyond translations. The case of almost periodic infinitesimal flexes was resolved in Badri, Kitson and Power \cite{bad-kit-pow-1} with the RUM spectrum playing a role analogous to that of the Bohr spectrum of an almost periodic sequence.
\end{rem}

\subsection{Lines of wave vectors imply localised flexes}\label{ss:lines_imply_localised}
We now obtain, in Theorem \ref{t:bandconverseDim2}, a converse direction to Proposition \ref{p:dualline}. We first remark that this proposition  extends readily to dimensions $d\geq 3$. The terminology in this case is that a \emph{rational hyperplane} $H$ for $\C$ in $\bR^d$ is a hyperplane which is spanned by $d-1$ vectors in a periodicity basis $\ul{a}$. Since any two periodicity bases for $\C$ are equivalent by a matrix in $GL(\bR^d,\bQ)$ this is well-defined. In this case an $H$-localised velocity field $u$ is said to be \emph{phase-periodic} for a phase factor $\omega$ in $\bT^{d-1}$ if for some such basis $\{a_1,\dots , a_{d-1}\}$ for $H$, we have $T_ku =\ol{\omega}^ku$ for all $k\in \bZ^{d-1}$.   

\begin{lemma}\label{l:bandconverse}
Let $\C$ be a crystal framework in $\bR^d, d\geq 2$, and suppose that there is a periodicity basis $\ul{a}= \{a_1, \dots ,a_{d}\}$ such that {\bf K}$(\C, \ul{a})$ contains the line of points 
$\{(\gamma_1,\dots ,\gamma_{d-1},t):t\in \bR\}$. Then there exists an $H$-localised phase-periodic flex, for the hyperplane $H$ spanned by $\{a_1, \dots ,a_{d-1}\}$, with phase factor $\omega = (e^{2\pi i\gamma_1}, \dots , e^{2\pi i\gamma_{d-1}})$.
\end{lemma}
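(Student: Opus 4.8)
The plan is to invert the construction in the proof of Proposition~\ref{p:dualline}. There, an $H$-localised phase-periodic flex $z$ was superposed over translates in the transverse direction $a_d$, with a phase weight $\lambda_d^{-k_d}$, to produce an IFM whose multiphase is $(\lambda_1,\dots,\lambda_{d-1},\lambda_d)$ with $\lambda_d$ ranging freely over $\bT$. Conversely, we are given, for \emph{every} $t\in\bR$, an IFM $u_t$ with multiphase $\omega_t = (e^{2\pi i\gamma_1},\dots,e^{2\pi i\gamma_{d-1}},e^{2\pi i t})$; the common first $d-1$ phase coordinates are fixed. The idea is that averaging $u_t$ against the transverse character $e^{2\pi i t}$ over $t\in[0,1)$ will annihilate all transverse Fourier modes except one, leaving a velocity field supported on a single transverse slab, i.e.\ an $H$-localised flex, which inherits the phase factor $\omega=(e^{2\pi i\gamma_1},\dots,e^{2\pi i\gamma_{d-1}})$ in the hyperplane directions.

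First I would fix, for each joint $p_j$ in a transversal (motif) set, the value $c_j(t):=u_t(p_j)\in\bC^d$, normalising the $u_t$ so that $t\mapsto c_j(t)$ is measurable (this requires a small argument: the solution space of the finite IFM linear system varies algebraically in $t$, so one can choose a measurable — indeed piecewise-analytic — section, perhaps after noting the spectrum is a real algebraic set and the line lies in it). Then for a joint $p$ in the cell labelled $(k_1,\dots,k_d)$ relative to the motif joint $p_j$, phase-periodicity gives $u_t(p)=e^{2\pi i(k_1\gamma_1+\dots+k_{d-1}\gamma_{d-1})}e^{2\pi i k_d t}c_j(t)$. Define
\[
z(p) \;=\; \int_0^1 u_t(p)\,\ud t
\;=\; e^{2\pi i(k_1\gamma_1+\dots+k_{d-1}\gamma_{d-1})}\Bigl(\int_0^1 e^{2\pi i k_d t}c_j(t)\,\ud t\Bigr).
\]
Each component of $z$ is a finite integral of bounded functions, hence well-defined, and $z$ satisfies every bar equation because each $u_t$ does and the bar equations are linear (so they pass under the integral, pairing against the fixed real edge vectors $p(v)-p(w)$). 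By construction $z$ is phase-periodic in the $a_1,\dots,a_{d-1}$ directions with phase factor $\omega$, since the $k_d$-independent scalar integral factors out exactly as displayed. What remains is the $H$-localisation: I must show $z$ is not supported on unboundedly many transverse slabs. Here the key point is that $\int_0^1 e^{2\pi i k_d t}c_j(t)\,\ud t$ is the $(-k_d)$-th Fourier coefficient of the $\bC^d$-valued $L^1$ function $c_j$; by the Riemann--Lebesgue lemma these tend to $0$, but that alone does not give compact support. To force genuine localisation I would instead observe that the finite IFM system is \emph{polynomial} in $\omega_d=e^{2\pi i t}$, so after clearing denominators each $c_j(t)$ can be taken to be a \emph{Laurent polynomial} (or a rational function whose only poles are off the relevant range) in $e^{2\pi i t}$ — this is exactly the ``rational vector of velocities'' phenomenon underlying the RUM theory — and then only finitely many Fourier coefficients are nonzero, so $z$ has support within a bounded band of transverse slabs, i.e.\ is $H$-localised. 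Finally, I would check $z\neq 0$: if every transverse Fourier coefficient vanished for every $j$, then every $u_t$ would be zero, contradicting that each $u_t$ is a genuine IFM.

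The main obstacle is the $H$-localisation step, i.e.\ upgrading ``average over $t$'' from merely giving a bounded, phase-periodic-in-$H$ flex to giving one with transversely \emph{bounded support}. Pointwise, the difficulty is that the naive averaging only yields Riemann--Lebesgue decay of the transverse Fourier coefficients, not finitely many nonzero ones; the resolution must exploit that the IFM data depends algebraically (polynomially/rationally) on the transverse phase $e^{2\pi i t}$, so that a single, globally-chosen Laurent-polynomial section $t\mapsto c_j(t)$ is available and its Fourier expansion terminates. A secondary technical point is making the measurable/algebraic selection $u_t$ uniformly over the line $\{(\gamma_1,\dots,\gamma_{d-1},t):t\in\bR\}$; I expect this to follow from the fact, already used in the proof of Theorem~\ref{t:orderN}, that the IFM space is cut out by polynomial equations with coefficients analytic in the phases, together with a standard choice of a rational parametrisation of (a component of) the solution variety over this line. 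Once a Laurent-polynomial section is fixed, the remaining verifications — that $z$ solves the bar equations, is $\omega$-phase-periodic across $H$, and is nonzero — are routine linearity and nonvanishing arguments.
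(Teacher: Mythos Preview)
Your approach is correct and genuinely different from the paper's. The paper avoids Fourier analysis and algebraic selections altogether: it samples the spectral line at the $m$ rational heights $t=l/m$, obtaining $m$ linearly independent IFMs (distinct multiphases give eigenvectors of the shift with distinct eigenvalues), all lying in the space of velocity fields that are $\omega$-phase-periodic along $H$ and $m$-periodic in the $a_d$ direction. Restricting such a field to the joints of the bars that cross the boundary of the slab $\tilde R_m$ lands in a space whose dimension is bounded independently of $m$; for $m$ large this restriction map has nontrivial kernel, and any nonzero kernel element, truncated to the slab, is the required $H$-localised phase-periodic flex. This dimension count is elementary and adapts verbatim to the higher-codimension and full-spectrum cases (Lemmas~\ref{l:bandconverse2} and \ref{l:bandconverseOrderN}). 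Your averaging argument instead makes the inversion of Proposition~\ref{p:dualline} explicit and connects with the symbol-function picture of the RUM spectrum, but its real content is the algebraic step you flag: a Laurent polynomial in $\omega_d$ vanishing on all of $\bT$ is identically zero, so the relevant minors of the symbol matrix vanish identically, the generic rank over $\bC(\omega_d)$ is deficient, and clearing denominators in any rational kernel vector produces a nonzero Laurent-polynomial section. One correction: drop the parenthetical allowance for a rational section with ``poles off the relevant range'' --- such a section has infinitely many nonzero Fourier coefficients and would \emph{not} yield an $H$-localised $z$; the argument genuinely requires the Laurent-polynomial form.
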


\begin{proof} Suppose first that $d=2$. 
Then the given line is $\{(\gamma_1,t):t\in \bR\}$ and $H=\bR a_1$. Let $Q$ be the set of $m$ rational numbers $l/{m}$ with $ 0\leq l<m$. Then for each $q \in Q$ there is an IFM $u_q$ for the wave vector $(\gamma_1,l/m)$.  These are linearly independent and span an $m$-dimensional vector space of infinitesimal flexes, denoted $\F_m$.

Consider the parallelogram
$R_m$ of points with positions $t_1a_1+t_2a_2$ in the ambient space with $0\leq t_1 <1, 0\leq t_2 <m$ . This ``vertical" rectangle is associated with $1$-fold phase periodicity in the ``horizontal" direction $a_1$, and $m$-fold periodicity in the direction $a_2$. Let $\V_m$ be the vector space of all velocity fields which, are 1-fold phase-periodic with phase factor $e^{2\pi i\gamma_1}$, with respect to $a_1$, and $m$-fold periodic for the vector $a_2$. We refer to this loosely as $R_m$-periodicity. In particular $\F_m\subseteq \V_m$.

Let  $\tilde{R}_m$ be the horizontal $a_1$-periodic band, generated by $R_m$ and let $J_m$ be the set of joints 
belonging to ``overlapping" bars which have exactly one joint in $R_m$ and one joint outside $\tilde{R}_m$, as in Figure \ref{f:overlappingbars}.
Note that the size of $J_m$ is independent of $m$. Thus, the space $\J_m$ of $R_m$-periodic velocity fields with support contained in the $a_1$-periodic extension $\tilde{J}_m$ of $J_m$ has a fixed dimension.
It follows that we may choose $m$ large enough so that the natural restriction map from $\F_m$ to $\J_m$ has  nontrivial kernel, containing a nonzero $R_m$-periodic infinitesimal flex $u$. Since $u$ assigns the zero velocity vector to both joints for any bar with a joint in the complement of $\tilde{R}_m$, it follows that the linearly localised velocity field $z$ which is defined to be zero outside $\tilde{R}_m$ and equal to $u$ on joints in $\tilde{R}_m$ is in fact an infinitesimal flex. The flex $z$ is $H$-localised and phase-periodic for $e^{2\pi i\gamma_1}$, completing the proof in this case.

\begin{center}
\begin{figure}[ht]
\centering
\input{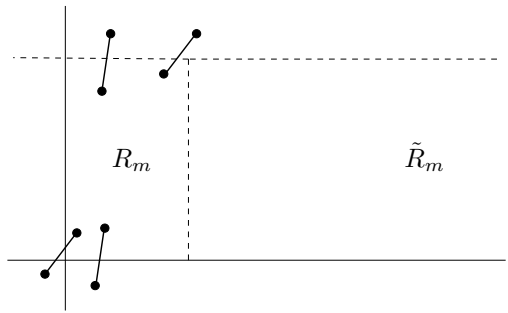}
\caption{Bars in $R_m$ overlapping the boundary of $\tilde{R}_m$.
} 
\label{f:overlappingbars}
\end{figure}
\end{center}
 
For general $d$ the argument is the same. Take $R_m$ to be the parallelepiped of points with positions $t_1a_1+\dots + t_da_d$, with $t_i \in [0,1)$ for $1\leq t_i\leq d-1$, and $t_d \in [0,m)$. Let $\tilde{R}_m$ be the $\{a_1, \dots , a_{d-1}\}$-periodic set generated by $R_m$.
The essential point, once again, is that the set $J_m$, defined as before, has fixed size.
\end{proof}

It follows that we have the following converse to Corollary \ref{c:dualline}.

\begin{thm}\label{t:bandconverseDim2}
Let $\C$ be a crystal framework in $\bR^2$ with periodicity basis $\ul{a}$ and let $H$ be a rational line in $\bR^2$ for $\C$ with reciprocal line $H^{\ul{a}}$ in $\bR^2_{\bf k}$. 
If {\bf K}$(\C, \ul{a})$ contains a line parallel to  $H^{\ul{a}}$ then $\C$ has a nonzero $H$-localised phase-periodic flex.
\end{thm}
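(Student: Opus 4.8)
The plan is to reduce Theorem \ref{t:bandconverseDim2} to Lemma \ref{l:bandconverse} by a change of periodicity basis. The obstruction to applying the lemma directly is that the spectral line we are handed is parallel to $H^{\ul{a}}$ rather than being one of the coordinate lines $\{(\gamma_1,\dots,\gamma_{d-1},t):t\in\bR\}$ appearing in the lemma. So the first step is to produce a periodicity basis for $\C$ whose first vector spans $H$. Since $H$ is rational for $\C$, it is parallel to some primitive integral vector $a_1' = \alpha_1 a_1 + \beta_1 a_2$ with $\gcd(\alpha_1,\beta_1)=1$; I can then choose $a_2'$ so that $\{a_1',a_2'\}$ is again a periodicity basis for $\C$ (possible because the change-of-basis matrix over the integral lattice generated by $\ul{a}$ can be completed to an element of $GL(2,\bZ)$ when $a_1'$ is primitive — and if $a_1'$ is not primitive one first divides out and notes that a coarser periodicity basis still works, passing through Proposition \ref{p:surjection} if needed). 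Thus $H = \bR a_1'$ is a \emph{rational line of the first basis vector} for the new periodicity basis $\ul{a}' = \{a_1', a_2'\}$.

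The second step is to track what happens to the wave vector spectrum under this change of basis. The spectra $\mathbf{K}(\C,\ul{a})$ and $\mathbf{K}(\C,\ul{a}')$ are identified via the transpose-inverse of the change-of-basis matrix acting on reciprocal space (equivalently, the multiphase of an IFM transforms covariantly with the lattice basis, exactly as in the discussion preceding Proposition \ref{p:surjection} and in the paragraph defining $H^{\ul{a}}$). Under this identification, the reciprocal line $H^{\ul{a}}$, which by the formula in the text is $\bR(\beta_1,-\alpha_1)$ in the $\ul{a}$-coefficient reciprocal space, is carried to the coordinate axis $\bR b_2'$ in the $\ul{a}'$-reciprocal space; that is, it becomes the line $\{(0,t):t\in\bR\}$. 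A line in $\mathbf{K}(\C,\ul{a})$ parallel to $H^{\ul{a}}$ therefore corresponds to a line in $\mathbf{K}(\C,\ul{a}')$ parallel to $\bR b_2'$, i.e. a line of the form $\{(\gamma_1',t):t\in\bR\}$ for some $\gamma_1'\in\bR$.

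The third step is the direct appeal to Lemma \ref{l:bandconverse} with $d=2$ and periodicity basis $\ul{a}'$: the lemma yields a nonzero $H$-localised flex $z$ which is phase-periodic with phase factor $e^{2\pi i\gamma_1'}$ for translation by $a_1'$, where $H = \bR a_1'$ is precisely the given line. This is exactly the conclusion sought, since $H$-localised and phase-periodic for $H$ are properties intrinsic to the flex and do not reference the basis. I expect the main obstacle to be the bookkeeping in the first two steps: making the change-of-basis argument clean (handling primitivity of $a_1'$, completing to a valid periodicity basis, and correctly composing the covariant/contravariant transformations so that ``parallel to $H^{\ul{a}}$'' really does become ``a coordinate line'' in the new reciprocal space). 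Once that alignment is in place, Lemma \ref{l:bandconverse} does all the real work — the dimension-counting trick comparing the growing flex space $\F_m$ with the fixed-dimensional overlap space $\J_m$ — and the theorem follows immediately.
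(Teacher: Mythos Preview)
Your proposal is correct and matches the paper's approach: the paper states Theorem \ref{t:bandconverseDim2} as an immediate consequence of Lemma \ref{l:bandconverse}, leaving implicit precisely the change-of-basis reduction you have spelled out. (One small slip in bookkeeping: in the paper's conventions the wave-vector coordinates transform by the integral matrix $Z$ itself, as in the computation of Lemma \ref{l:changebasisformula}(i), rather than by its transpose-inverse; but your conclusion that a line parallel to $H^{\ul{a}}$ becomes a line $\{(\gamma_1',t):t\in\bR\}$ in $\mathbf{K}(\C,\ul{a}')$ is correct either way, and Proposition \ref{p:surjection} is in fact never needed once $a_1'$ is taken primitive.)
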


Lemma \ref{l:bandconverse} generalises in a routine way to give the next lemma showing that a linear subspace of dimension $r<d$ in phase space implies the existence of $H$-localised phase-periodic flexes for an associated  linear subspace $H$ in ambient space of dimension $d-r$.

\begin{lemma}\label{l:bandconverse2}
Let $\C$ be a crystal framework in the space $\bR^d, d\geq 2$, and suppose that there is a periodicity basis $\ul{a}$ such that {\bf K}$(\C, \ul{a})$ contains the linear subspace
$\{(\gamma_1,\dots ,\gamma_{d-r},t_1,\dots ,t_r):t_i\in \bR\}$ for some $1\leq r <d$. Then there exists an $H$-localised phase-periodic flex, for the ambient linear subspace $H$ spanned by $\{a_1, \dots ,a_{d-r}\}$, with phase factor $(e^{2\pi i\gamma_1}, \dots , e^{2\pi i\gamma_{d-r}})$.
\end{lemma}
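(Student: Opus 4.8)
The plan is to mimic the proof of Lemma \ref{l:bandconverse} almost verbatim, treating the $r$ ``free'' coordinates exactly as the single free coordinate $t$ was treated there. First I would fix the periodicity basis $\ul{a}=\{a_1,\dots,a_d\}$ and the rational numbers $\gamma_1,\dots,\gamma_{d-r}$, and for a scaling parameter $m$ consider the sublattice $m\cdot(a_{d-r+1},\dots,a_d)$ together with the original vectors $a_1,\dots,a_{d-r}$. For each $r$-tuple $\ul{q}=(l_1/m,\dots,l_r/m)$ with $0\le l_i<m$, the hypothesis that {\bf K}$(\C,\ul{a})$ contains the whole subspace $\{(\gamma_1,\dots,\gamma_{d-r},t_1,\dots,t_r):t_i\in\bR\}$ gives an IFM $u_{\ul{q}}$ with wave vector $(\gamma_1,\dots,\gamma_{d-r},l_1/m,\dots,l_r/m)$. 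These $m^r$ flexes are linearly independent (distinct multiphase characters of the finite abelian group $(\bZ/m\bZ)^r$) and span an $m^r$-dimensional space $\F_m$ of infinitesimal flexes inside the space $\V_m$ of velocity fields that are phase-periodic with factor $(e^{2\pi i\gamma_1},\dots,e^{2\pi i\gamma_{d-r}})$ in the directions $a_1,\dots,a_{d-r}$ and $m$-fold periodic in the directions $a_{d-r+1},\dots,a_d$.

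Next I would define the ``slab'' region $R_m$ to be the set of points $t_1a_1+\dots+t_da_d$ with $t_i\in[0,1)$ for $i\le d-r$ and $t_i\in[0,m)$ for $d-r<i\le d$, and let $\tilde R_m$ be its $\{a_1,\dots,a_{d-r}\}$-periodic extension, i.e. the thickened affine subspace of ``width $m$'' around $H=\operatorname{span}\{a_1,\dots,a_{d-r}\}$. The crucial finiteness observation, exactly as before, is that the set $J_m$ of joints lying on bars that straddle the boundary of $\tilde R_m$ (one endpoint inside $R_m$, one endpoint outside $\tilde R_m$) has cardinality bounded independently of $m$: a bar has bounded length, so only joints within a fixed distance of $\partial R_m$ in the $r$ thickened directions are involved, and there are $O(m^{r-1})$ of those — wait, here I must be careful, since in higher codimension the boundary of the slab is itself $(r-1)$-dimensional and its periodic extension is infinite; the correct statement is that the space $\J_m$ of $R_m$-periodic velocity fields supported on the periodic extension $\tilde J_m$ of $J_m$ has dimension $O(m^{r-1})$, which is of lower order than $\dim\F_m = m^r$.

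Then the counting argument finishes it: for $m$ large the restriction map $\F_m\to\J_m$ has nontrivial kernel, so there is a nonzero $R_m$-periodic infinitesimal flex $u$ vanishing on all the straddling joints; the velocity field $z$ equal to $u$ on $\tilde R_m$ and $0$ elsewhere is then still an infinitesimal flex (every bar either lies entirely within $\tilde R_m$, where $z=u$, or has both endpoints outside $\tilde R_m$ with $z=0$, or straddles, in which case $z$ vanishes at both its endpoints); and $z$ is $H$-localised and phase-periodic with factor $(e^{2\pi i\gamma_1},\dots,e^{2\pi i\gamma_{d-r}})$ in the directions $a_1,\dots,a_{d-r}$, as required.

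The step I expect to be the main obstacle — really the only place where genuine care beyond Lemma \ref{l:bandconverse} is needed — is the dimension bookkeeping in the previous paragraph: verifying that $\dim\J_m$ genuinely grows strictly slower than $\dim\F_m=m^r$ as $m\to\infty$. In the codimension-one case of Lemma \ref{l:bandconverse} the boundary set $J_m$ was literally of fixed size; here one must instead observe that $J_m$ is a fixed-size set up to translations along $a_1,\dots,a_{d-r}$ and, within the $m$-fold periodic cell, lives within a bounded neighbourhood of the $(r-1)$-dimensional boundary of the $r$-cube $[0,m)^r$, hence contributes only $O(m^{r-1})$ to $\dim\J_m$. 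Once this estimate is in place, the rest is identical to the codimension-one argument and I would simply say ``the argument of Lemma \ref{l:bandconverse} now applies verbatim''.
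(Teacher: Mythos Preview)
Your proposal is correct and is exactly the routine generalisation the paper has in mind; the paper gives no separate proof of Lemma~\ref{l:bandconverse2}, merely asserting that the argument of Lemma~\ref{l:bandconverse} carries over. Your one genuinely new observation---that $\dim\J_m=O(m^{r-1})$ rather than $O(1)$, which still loses to $\dim\F_m=m^r$---is precisely the count the paper itself makes explicit in the extreme case $r=d$ in the proof of Lemma~\ref{l:bandconverseOrderN}, so your bookkeeping is both correct and in the intended spirit.
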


Also, in the extreme $r=d$ case {\bf K}$(\C, \ul{a})=\bR^d_{\bf k}$ we have the following.

\begin{lemma}\label{l:bandconverseOrderN}
Let $\C$ be a crystal framework in the space $\bR^d, d\geq 2$, with periodicity basis $\ul{a}$. If {\bf K}$(\C, \ul{a})=\bR^d_{\bf k}$ then there exists a nonzero local infinitesimal flex.
\end{lemma}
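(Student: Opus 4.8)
The plan is to reduce the $r=d$ case to the successive application of Lemma \ref{l:bandconverse2}, peeling off one dimension of the reciprocal space at a time, and then to invoke the (already established) equivalence in Theorem \ref{t:orderN}. Concretely, since ${\bf K}(\C,\ul{a}) = \bR^d_{\bf k}$ in particular contains the codimension-one linear subspace $\{(\gamma_1,\dots,\gamma_{d-1},t):t\in\bR\}$ for \emph{every} choice of $(\gamma_1,\dots,\gamma_{d-1})$, Lemma \ref{l:bandconverse} furnishes, for each such choice of phase, a nonzero $H$-localised phase-periodic flex where $H$ is the hyperplane spanned by $\{a_1,\dots,a_{d-1}\}$. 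The key observation is that the family of these hyperplane-localised flexes is itself parametrised by a full $(d-1)$-torus of phase factors $(e^{2\pi i\gamma_1},\dots,e^{2\pi i\gamma_{d-1}})$.

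Next I would iterate the construction of Lemma \ref{l:bandconverse} \emph{within} the hyperplane $H$. A hyperplane-localised flex $z$ that is phase-periodic for all of $\bT^{d-1}$ behaves, from the point of view of the $(d-1)$-dimensional sublattice $\bZ a_1 + \dots + \bZ a_{d-1}$, like a crystal framework (of "thickened hyperplane" type) whose RUM spectrum fills $\bT^{d-1}$; the truncation-and-overlap argument of Lemma \ref{l:bandconverse} — choosing the periodicity box large enough that the restriction map to the fixed-dimensional space of boundary-overlapping velocity fields acquires nontrivial kernel — then produces a nonzero flex localised to a codimension-two affine slab, phase-periodic for $\bT^{d-2}$. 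Repeating this $d-1$ times in total collapses the localisation down to a single translate-periodic "line" of joints, i.e. a flex whose support meets only finitely many fundamental domains of $\bZ^d$ and which is periodic (up to a unimodular factor) along that one remaining direction.

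Finally, from such a "linearly localised, one-direction phase-periodic" flex one extracts a genuinely finitely supported flex by exactly the kernel argument of Lemma \ref{l:bandconverse} applied one last time in the single remaining direction: the overlap set along a fundamental interval has fixed size, so a long enough window yields a nonzero flex vanishing on the boundary overlap, which then truncates to a nonzero \emph{local} (finitely supported) infinitesimal flex. Alternatively — and this is the cleaner route — once we know $\Omega(\C,\ul{a}) = \bT^d$ (equivalently ${\bf K}(\C,\ul{a}) = \bR^d_{\bf k}$), Theorem \ref{t:orderN}, whose implication (i)$\Rightarrow$(ii) is proved in the excerpt and whose (ii)$\Rightarrow$(iii) is immediate, reduces the problem to showing (iii)$\Rightarrow$(i); but (iii)$\Rightarrow$(i) is precisely what Lemma \ref{l:bandconverseOrderN} asserts, so in fact the honest content is the inductive peeling argument above, and I would present it as a self-contained induction on $d$ with Lemma \ref{l:bandconverse} (the $d=2$, one-step case) as the base of the machinery.

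The main obstacle is bookkeeping in the inductive step: one must verify that at each stage the "boundary overlap" set $J_m$ — the joints lying on bars straddling the boundary of the current periodicity box in the direction being collapsed — has size bounded independently of the window length $m$, while simultaneously the space of candidate flexes grows linearly in $m$. For $d=2$ this is the content of Lemma \ref{l:bandconverse}; for the inductive step one needs that passing to a hyperplane-localised subframework does not destroy the local finiteness / bounded-degree properties that guarantee the fixed size of $J_m$. This is true because $H$-localisation bounds the number of joints within any bounded distance of $H$ in each fundamental cell, so the relevant combinatorial data is genuinely finite at every stage; making this precise is the only non-routine part.
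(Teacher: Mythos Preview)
Your inductive peeling argument is correct in outline and can be made rigorous with the bookkeeping you anticipate, but it takes a genuinely different route from the paper. The paper gives a direct single-step argument: the $m^d$ rational wave vectors $(k_1/m,\dots,k_d/m)$ with $0\le k_i<m$ yield $m^d$ linearly independent IFMs, all periodic for the enlarged basis $m\ul{a}$; the velocity fields supported on joints of bars overlapping the boundary of the box $P_m=[0,m)a_1\times\dots\times[0,m)a_d$ span a space of dimension only $O(m^{d-1})$; hence for large $m$ the restriction map has nontrivial kernel, and any nonzero kernel element truncates to a local flex supported in $P_m$. This is exactly the overlap-counting mechanism of Lemma~\ref{l:bandconverse}, but executed once in all $d$ directions simultaneously rather than $d$ times successively.

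Your approach is more modular --- it reuses Lemma~\ref{l:bandconverse} essentially as a black box --- but at the cost of the bookkeeping you identify: one must check that at each stage the hyperplane-localised flexes can all be taken to lie in a band of \emph{uniform} width (this holds, since the bound on $m$ in the proof of Lemma~\ref{l:bandconverse} depends only on the fixed size of $J_m$ and not on the phase factor), and that the flexes obtained at stage $k$ for the distinct rational phases $l/m$ are linearly independent (which follows from a Vandermonde argument on the shift action). The paper's route sidesteps all of this with a single global dimension count, $m^d$ against $O(m^{d-1})$. Your aside about invoking Theorem~\ref{t:orderN} is, as you correctly recognise, circular --- that theorem quotes the present lemma for its hard direction --- and should simply be omitted.
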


\begin{proof} The following argument, for $d=2$, is similar to the proof of  Lemma \ref{l:bandconverse}, and the argument for general $d$ is entirely similar. Let $Q$ be the set of $m^2$ points in the RUM spectrum of the form $(k/m,l/m),$ for $0\leq k,l\leq m-1$, with associated IFMs $u_q, q \in Q$. Let $P_m$ be the parallelogram in ambient space of points with positions $t_1a_1+ t_2a_2,$ with $0\leq t_1, t_2 \leq m$, and let $\V_m$ be the space of velocity fields which are periodic for the vectors $ma_1, ma_2$. 
The subspace of periodic velocity fields in $\V_m$ which are supported on joints for bars that overlap the boundary of $P_m$ has dimension of order $m$. On the other hand the linear span, $\F_m$ say, of the IFMs $u_q, q \in Q,$ has dimension $m^2$, and $\F_m\subseteq \V_m$. It follows that there exist a nonzero linear combination $u$ of the $u_q$ which is zero on the joints of the overlapping bars. Thus, the restriction of $u$ to joints in $P_m$ defines a local infinitesimal flex with support in $P_m$.
\end{proof}


\section{Line figures for localised flexes}\label{s:linefigures}
In the previous section we have given connections between linearly localised infinitesimal flexes and linear structure in the RUM spectrum. As a prelude to considering such relationships for quasicrystal bar-joint frameworks we summarise these connections for crystal frameworks in $\bR^2$ and introduce the additional terminology of {line figures}.

A \emph{line figure} in the ambient space $\bR^d$, or in the reciprocal space of a basis $\ul{a}$ in $\bR^d$, is a set of lines through the origin. 
For a general (Borel) set $\M$ in $\bR^d$ the \emph{line figure} $LF(\M)$ is the union of the set of lines through the origin which are parallel to a line in $\M$.

\begin{definition}\label{d:LFF}
Let $\G$ be a countable bar-joint framework in $\bR^d$, for $d\geq 2$. Then the \emph{localised flex figure} of $\G$, denoted  $LFF(\G)$, is the line figure in $\bR^d$ formed by lines $H$ through the origin for which there exists a nonzero $H$-localised infinitesimal flex. 
\end{definition}

\begin{definition}\label{d:PPLFF}
Let $\C$ be a crystal framework in $\bR^2$ with a periodicity basis $\ul{a}$.
Then the \emph{localised phase-periodic flex figure} $LPFF(\C)$ is the line figure in $\bR^2$ formed by the rational lines $H$ through the origin for which there exists a nonzero phase-periodic $H$-localised infinitesimal flex. 
\end{definition}

In particular the localised phase-periodic flex figure is only defined for crystal frameworks and it consists of at most a countable set of lines through the origin.

From the compactness of the RUM spectrum we have noted (in the comments following Corollary \ref{c:dualline}) that for ambient dimension $d=2$ we have the following dichotomy. Either $LF({\bf K}(\C, \ul{a}))$ is equal to $\bR^2_{\bf k}$ or  it consists of a (possibly empty) set of lines with rational slope. In the latter case, when the spectrum is proper, it follows from Theorem \ref{t:bandconverseDim2} that a line $L$ in $LF({\bf K}(\C, \ul{a}))$ is determined by a rational line $H$ in ambient space for which there exists a phase-periodic $H$-localised infinitesimal flex. The converse also holds, by Corollary \ref{c:dualline}. Thus, writing $H^{\ul{a}}$ for $L$, as we did prior to Corollary \ref{c:dualline}, we  have a bijective correspondence $\theta_{\ul{a}}:H \to H^{\ul{a}}$ between the ambient space lines $H$ of $LPFF(\C)$ (which are necessarily rational with respect to $\ul{a}$ if the spectrum is proper) and lines in the line figure $LF({\bf K}(\C, \ul{a}))$ (which are necessarily rational with respect to the reciprocal basis if the spectrum is proper). 
Thus we have the following equality.

\begin{thm}\label{t:LFFandLF}
Let $\C$ be a crystal framework in $\bR^2$ with periodicity basis $\ul{a}$ and proper zero mode spectrum ${\bf K}(\C,\ul{a})$. Then
\[
 LF({\bf K}(\C,\ul{a})) =  \theta_{\ul{a}}(LPFF(\C)).
\]
\end{thm}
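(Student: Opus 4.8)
The plan is to observe that this theorem is essentially a bookkeeping consequence of the two directions already established in Section \ref{s:crystallographic}, namely Corollary \ref{c:dualline} (localised phase-periodic flex $\Rightarrow$ spectral line) and Theorem \ref{t:bandconverseDim2} (spectral line $\Rightarrow$ localised phase-periodic flex), combined with the dichotomy for $LF({\bf K}(\C,\ul a))$ when the spectrum is proper. So the proof is really a matter of assembling these pieces into the stated set equality and checking that the map $\theta_{\ul a}$ is the correct bijection.

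First I would set up notation carefully: recall that for a rational line $H = \bR a_1'$ in ambient space, with $a_1' = \alpha_1 a_1 + \beta_1 a_2$, the reciprocal line $H^{\ul a}$ is defined as $\bR(\beta_1,-\alpha_1)$ in $\bR^2_{\bf k}$, and that $\theta_{\ul a}(H) := H^{\ul a}$ depends only on $H$ (not on the choice of direction vector or phase factor), so $\theta_{\ul a}$ is a well-defined injective map from rational lines in ambient space to rational lines in reciprocal space. Then I would prove the two inclusions. For $\theta_{\ul a}(LPFF(\C)) \subseteq LF({\bf K}(\C,\ul a))$: take $H \in LPFF(\C)$, so by Definition \ref{d:PPLFF} there is a nonzero $H$-localised phase-periodic infinitesimal flex; Corollary \ref{c:dualline} then gives that ${\bf K}(\C,\ul a)$ contains a line parallel to $H^{\ul a}$, hence $H^{\ul a} = \theta_{\ul a}(H) \in LF({\bf K}(\C,\ul a))$ by the definition of a line figure. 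For the reverse inclusion $LF({\bf K}(\C,\ul a)) \subseteq \theta_{\ul a}(LPFF(\C))$: let $L$ be a line through the origin in $LF({\bf K}(\C,\ul a))$. Since the spectrum is proper, the dichotomy recorded after Corollary \ref{c:dualline} forces $L$ to have rational slope with respect to the reciprocal basis $\ul b$; hence $L = H^{\ul a}$ for a unique rational line $H$ in ambient space (invert the formula $\bR(\alpha_1,\beta_1) \mapsto \bR(\beta_1,-\alpha_1)$). As ${\bf K}(\C,\ul a)$ contains a line parallel to $L = H^{\ul a}$, Theorem \ref{t:bandconverseDim2} yields a nonzero $H$-localised phase-periodic flex, so $H \in LPFF(\C)$ and $L = \theta_{\ul a}(H) \in \theta_{\ul a}(LPFF(\C))$. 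Combining the two inclusions gives the equality.

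The main point to be careful about --- what I would treat as the only real obstacle --- is ensuring that the rationality hypotheses line up on both sides. One must use the properness of ${\bf K}(\C,\ul a)$ in two places: to guarantee every line of $LF({\bf K}(\C,\ul a))$ has rational slope (so that it is $H^{\ul a}$ for an honest rational ambient line $H$, making $\theta_{\ul a}$ surjective onto that line figure), and implicitly (via Theorem \ref{t:bandconverseDim2}, whose hypothesis already presumes $H$ rational) to know that the flex produced is genuinely phase-periodic and $H$-localised in the required sense. I would also remark that in the non-proper case ${\bf K}(\C,\ul a) = \bR^2_{\bf k}$ both sides degenerate differently --- $LF({\bf K}(\C,\ul a))$ is all of $\bR^2_{\bf k}$ while $LPFF(\C)$ need not contain every rational line --- which is precisely why the hypothesis of a proper spectrum is needed, and this is worth one sentence at the end. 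Everything else is routine unwinding of definitions, so I would keep the write-up short.

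\begin{proof}
Recall that $\theta_{\ul a}$ sends a rational line $H=\bR(\alpha_1 a_1+\beta_1 a_2)$ in the ambient space to the reciprocal line $H^{\ul a}=\bR(\beta_1,-\alpha_1)$ in $\bR^2_{\bf k}$; this depends only on $H$, and the assignment $\bR(\alpha_1,\beta_1)\mapsto\bR(\beta_1,-\alpha_1)$ is a bijection between rational lines in the coefficient space for $\ul a$ and rational lines in $\bR^2_{\bf k}$. Hence $\theta_{\ul a}$ is a bijection from the set of rational ambient lines onto the set of rational lines in $\bR^2_{\bf k}$.

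Suppose $H\in LPFF(\C)$. By Definition \ref{d:PPLFF}, $H$ is rational and there is a nonzero $H$-localised phase-periodic infinitesimal flex of $\C$. By Corollary \ref{c:dualline}, ${\bf K}(\C,\ul a)$ contains a line parallel to $H^{\ul a}$, and therefore $\theta_{\ul a}(H)=H^{\ul a}\in LF({\bf K}(\C,\ul a))$. This proves $\theta_{\ul a}(LPFF(\C))\subseteq LF({\bf K}(\C,\ul a))$.

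Conversely, let $L$ be a line through the origin in $LF({\bf K}(\C,\ul a))$, so ${\bf K}(\C,\ul a)$ contains a line parallel to $L$. Since ${\bf K}(\C,\ul a)$ is proper, the dichotomy noted after Corollary \ref{c:dualline} shows that $L$ has rational slope with respect to the reciprocal basis. By the bijection above there is a unique rational ambient line $H$ with $H^{\ul a}=L$. Then ${\bf K}(\C,\ul a)$ contains a line parallel to $H^{\ul a}$, so by Theorem \ref{t:bandconverseDim2} the framework $\C$ has a nonzero $H$-localised phase-periodic flex; thus $H\in LPFF(\C)$ and $L=\theta_{\ul a}(H)\in\theta_{\ul a}(LPFF(\C))$. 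This gives $LF({\bf K}(\C,\ul a))\subseteq\theta_{\ul a}(LPFF(\C))$, and combining the two inclusions completes the proof.
\end{proof}
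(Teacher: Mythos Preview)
Your proof is correct and follows essentially the same route as the paper: the argument in the paragraph preceding the theorem invokes the rational-slope dichotomy (from the remarks after Corollary~\ref{c:dualline}), then uses Theorem~\ref{t:bandconverseDim2} and Corollary~\ref{c:dualline} for the two directions, exactly as you do. Your write-up is more explicit about the bijection $\theta_{\ul a}$ and the two inclusions, but the underlying logic is identical.
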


Formally, the map $\theta_{\ul{a}}$ is a map $P(\bR^2)\to P(\bR^2_{\bf k})$ between projective spaces. However, we have seen from the remarks preceding Corollary \ref{c:dualline} both the geometric specification of $H^{\ul{a}}$ and how the map may be linearly implemented.
We use this map in Definition \ref{d:slippagespec} where it is considered, as above, as a map from a line figure in ambient space to a line figure in the reciprocal space of an ambient space basis $\ul{a}$.

The next lemma shows how the linear figure of the zero mode spectrum is  transformed under a change of periodicity basis. 

\begin{lemma}\label{l:changebasisformula}
Let $\C$ be a crystal framework in $\bR^2$ with periodicity basis $\ul{a}$ and let $\ul{a}^* = \{a_1^*,a_2^* \}$, with
$a_1^*=\alpha_1a_1+\beta_1a_2$ and $ a_2^* = \alpha_2a_1+\beta_2a_2$ where  $\alpha_1, \beta_1,\alpha_2,\beta_2$ are integers. Let
$ Z$ be the integral matrix
$\left[\begin{matrix}
\alpha_1&\beta_1\\
\alpha_2&\beta_2
\end{matrix}
\right].$ Then 
\medskip

(i) $ Z{\bf K}(\C, \ul{a})\subseteq {\bf K}(\C, \ul{a}^*)$.
\medskip

(ii)  $LF({\bf K}(\C, \ul{a}^*)) =  Z(LF({\bf K}(\C, \ul{a}))) = \theta_{{\ul{a}^*}}\theta_{\ul{a}}^{-1}(LF({\bf K}(\C, \ul{a})))$.
\medskip

Also, let $\ul{a}' = \{a_1', a_2' \}$ be a periodicity basis with $a_1'=\alpha_1'a_1+\beta_1'a_2$ and $a_2' = \alpha_2'a_1+\beta_2'a_2$, where $\alpha_1', \beta_1',\alpha_2',\beta_2'$ are rational numbers. If $ Z'$ is the rational matrix 
$\left[\begin{matrix}
\alpha_1'&\beta_1'\\
\alpha_2'&\beta_2'
\end{matrix}
\right]$ then 
\medskip

(iii) $LF({\bf K}(\C, \ul{a}')) =  Z'(LF({\bf K}(\C, \ul{a})))$.

\end{lemma}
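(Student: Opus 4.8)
The plan is to derive all three parts from the surjection map for wave‐vector spectra already introduced before Proposition~\ref{p:surjection}, together with the identification of $\theta_{\ul{a}}$ given before Corollary~\ref{c:dualline}. The key observation is that an infinitesimal flex mode for $(\omega,\ul{a})$ is automatically an infinitesimal flex mode for $(\omega^{(k)},k\cdot\ul{a})$, and more generally for any periodicity basis obtained from $\ul{a}$ by an integral change of variables; the phase factors transform by the transpose-inverse of the change-of-basis matrix, which is exactly what the reciprocal-space description of $\theta$ encodes.

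For part (i), I would take a wave vector ${\bf k}=\gamma_1b_1+\gamma_2b_2$ in ${\bf K}(\C,\ul{a})$ with associated IFM $u$ and multiphase $\omega=(e^{2\pi i\gamma_1},e^{2\pi i\gamma_2})$. Writing the translation $T_{a_i^*}$ as a composition $T_{a_1}^{\alpha_i}T_{a_2}^{\beta_i}$, one computes directly that $u(T_{a_i^*}p_j)=\omega_1^{\alpha_i}\omega_2^{\beta_i}\,u(p_j)$, so $u$ is phase-periodic for the basis $\ul{a}^*$ with multiphase whose logarithm is $(\alpha_1\gamma_1+\beta_1\gamma_2,\ \alpha_2\gamma_1+\beta_2\gamma_2)=Z{\bf k}$. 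Hence $Z{\bf k}\in{\bf K}(\C,\ul{a}^*)$. (One should note $\ul{a}^*$ is genuinely a periodicity basis, i.e.\ $Z$ is invertible over $\bQ$, which the statement tacitly assumes.) For the reverse inclusion needed to get equality at the level of line figures one uses Proposition~\ref{p:surjection}: given an IFM for $\ul{a}^*$ with logarithmic phase $\eta$, pulling back along the finite-index sublattice produces, for a suitable choice of roots, an IFM for $\ul{a}$ with phase $\gamma$ satisfying $Z\gamma=\eta$; thus $Z$ maps ${\bf K}(\C,\ul{a})$ \emph{onto} ${\bf K}(\C,\ul{a}^*)$ modulo the lattice $\bZ^2$, and in particular $Z$ carries the set of linear directions in ${\bf K}(\C,\ul{a})$ bijectively onto those in ${\bf K}(\C,\ul{a}^*)$.

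Part (ii) then follows by applying $LF(\cdot)$ to part (i): since $Z$ is a linear isomorphism it maps lines through the origin to lines through the origin, and by the onto statement just described every line of ${\bf K}(\C,\ul{a}^*)$ is the $Z$-image of a line of ${\bf K}(\C,\ul{a})$, giving $LF({\bf K}(\C,\ul{a}^*))=Z\bigl(LF({\bf K}(\C,\ul{a}))\bigr)$. For the second equality I would use Theorem~\ref{t:LFFandLF} in the proper case: $LF({\bf K}(\C,\ul{a}))=\theta_{\ul{a}}(LPFF(\C))$ and $LF({\bf K}(\C,\ul{a}^*))=\theta_{\ul{a}^*}(LPFF(\C))$, and since $LPFF(\C)$ does not depend on the choice of periodicity basis, substituting $LPFF(\C)=\theta_{\ul{a}}^{-1}(LF({\bf K}(\C,\ul{a})))$ yields $LF({\bf K}(\C,\ul{a}^*))=\theta_{\ul{a}^*}\theta_{\ul{a}}^{-1}(LF({\bf K}(\C,\ul{a})))$. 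Matching this against $Z(\cdot)$ amounts to checking, from the explicit orthogonal-line recipe preceding Corollary~\ref{c:dualline}, that $\theta_{\ul{a}^*}\theta_{\ul{a}}^{-1}$ is implemented on projective classes by the transpose-inverse of the ambient change-of-basis matrix, which is precisely $Z$ acting on reciprocal coordinates; this is the one genuinely computational point but it is a two-line matrix identity. I would add a remark handling the improper case ${\bf K}(\C,\ul{a})=\bR^2_{\bf k}$ separately, where all three sides are all of $\bR^2_{\bf k}$ and there is nothing to prove.

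Part (iii) is the extension to rational $Z'$. Here the direct phase-transport computation of part (i) still works verbatim to give $Z'{\bf K}(\C,\ul{a})\subseteq{\bf K}(\C,\ul{a}')$ once one interprets $T_{a_i'}$ via a common refinement: choose a positive integer $N$ with $N\alpha_i',N\beta_i'\in\bZ$, apply part (i) with the integral matrix $NZ'$ to pass to the basis $N\cdot\ul{a}$ scaled appropriately, and then apply the scaling maps ${\bf K}(\C,\ul{a})\to{\bf K}(\C,N\cdot\ul{a})$ and their surjectivity from Proposition~\ref{p:surjection} to descend. At the level of line figures the scalar $N$ is invisible, so $LF({\bf K}(\C,\ul{a}'))=Z'\bigl(LF({\bf K}(\C,\ul{a}))\bigr)$ drops out. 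The main obstacle I anticipate is purely bookkeeping: keeping straight the duality between the ambient change-of-basis matrix and its appearance as $Z$ on reciprocal-space coordinates, so that the $Z$ in the statement (read on wave vectors) really is the matrix whose transpose-inverse acts on ambient lines. Once that convention is pinned down via the $\langle a_i,b_j\rangle=\delta_{ij}$ identity, everything is routine linear algebra plus an invocation of Proposition~\ref{p:surjection} for the onto direction.
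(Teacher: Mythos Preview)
Your proof of (i) is the same direct phase-transport computation as the paper's, and your treatment of (iii) via a common integral refinement matches the paper's one-line reduction.

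For (ii) you take a genuinely different route. The paper does not invoke any surjectivity statement. Instead it argues: in the proper case Theorem~\ref{t:LFFandLF} gives $LF({\bf K}(\C,\ul{a}))=\theta_{\ul{a}}(LPFF(\C))$ and $LF({\bf K}(\C,\ul{a}^*))=\theta_{\ul{a}^*}(LPFF(\C))$, so the map $\theta_{\ul{a}^*}\theta_{\ul{a}}^{-1}$ is already a bijection between the two line figures; part (i) gives the inclusion $Z(LF({\bf K}(\C,\ul{a})))\subseteq LF({\bf K}(\C,\ul{a}^*))$; and these two facts together force equality. (Implicitly this uses either that $Z$ and $\theta_{\ul{a}^*}\theta_{\ul{a}}^{-1}$ agree as projective linear maps, which is the ``two-line matrix identity'' you mention, or that in the proper case the line figure is a finite set of rational lines so an injection between equinumerous finite sets is onto.) Your route instead upgrades (i) to a surjection modulo $\bZ^2$ by the representation-theory argument behind Proposition~\ref{p:surjection}. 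This is valid, but note two points: Proposition~\ref{p:surjection} is stated only for diagonal sublattices $k\cdot\ul{a}$, so you are really using its proof (finite abelian group decomposition) in the general finite-index case; and the passage from ``$Z$ is onto modulo $\bZ^2$'' to ``$Z$ is onto on line directions'' is not automatic and needs the observation that a line in ${\bf K}(\C,\ul{a}^*)$, being covered by finitely many $\bZ^2$-translates of the algebraic set $Z{\bf K}(\C,\ul{a})$, must lie in one of them. What your approach buys is that the first equality in (ii) becomes independent of the deeper Theorem~\ref{t:bandconverseDim2}; what the paper's approach buys is that no extension of Proposition~\ref{p:surjection} is needed and the argument stays at the level of line figures throughout.
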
 

\begin{proof}
(i) Let $(\gamma_1, \gamma_2)\in {\bf K}(\C, \ul{a})$ with associated IFM $u$. Let $T^*_l, l\in \bZ^2$, be the translation isometries for the basis $\ul{a}^*$. Then
$
T^*_l = T_1^{\alpha_1l_1+\alpha_2l_2}T_2^{\beta_1l_1+\beta_2l_2}.
$
Thus, for a joint $p_{\kappa,k}$ of $\C$, with respect to the basis $\ul{a}$, where $1\leq \kappa\leq n, k\in \bZ^2$, we have
\[
u(T^*_lp_{\kappa, k})= e^{2\pi i(\alpha_1l_1+\alpha_2l_2)\gamma_1}
 e^{2\pi i(\beta_1l_1+\beta_2l_2)\gamma_2}u(p_{\kappa, k})
=e^{2\pi i\delta_1l_1}e^{2\pi i\delta_2l_2}u(p_{\kappa, k})
\]
where $\delta_i=\alpha_i\gamma_1+\beta_i\gamma_2$, for $i=1,2$. It follows that $(\delta_1,\delta_2)\in {\bf K}(\C, \ul{a}^*)$, as required.

(ii)  If ${\bf K}(\C, \ul{a})= \bR^2$ then equality holds. Suppose on the other hand that the zero mode spectrum is proper. Then it follows from Theorem \ref{t:LFFandLF} that
  $LF({\bf K}(\C, \ul{a}))$ and  $LF({\bf K}(\C, \ul{a}^*))$ are in bijective correspondence by the linear map $\theta_{{\ul{a}^*}}\theta_{\ul{a}}^{-1}$. On the other hand, using (i), we have 
\[
Z(LF({\bf K}(\C, \ul{a}))) = LF(Z{\bf K}(\C, \ul{a}))\subseteq LF({\bf K}(\C, \ul{a}^*).
\]
Putting these facts together it follows that the inclusion is an equality.

(iii) This follows from (ii) by considering a periodicity basis whose vectors are integral linear combinations of the vectors of $\ul{a}$, as well as being integral linear combinations of the vectors of $\ul{a'}$. 
\end{proof}

\section{Zero mode spectra for aperiodic frameworks}\label{s:aperiodic}
The zero modes, or infinitesimal flex modes (IFMs)
of a crystallographic framework $\C$ are determined by a finite data set, for a periodically repeating block of nodes and bonds, and a multiphase $\omega \in \bT^d$.  Nevertheless, zero modes capture many aspects of the flexibility of $\C$.
It is of interest then to determine analogous zero modes and spectra, or partial analogues, for quasicrystallographic frameworks. We shall do this in terms of generalised phase fields and infinitesimal flexes which are approximately phase-periodic. 

\subsection{Phase-periodic velocity fields on $\bR^d$}\label{ss:phasefieldsonRd}
We first define phase fields and phase-periodic velocity fields on $\bR^d$, with respect to partitions associated with a basis $\ul{a}$. Phase-periodic velocity fields can then be defined on a Delone bar-joint framework by restrictions. In particular we define the zero mode spectrum of a crystallographic framework in these terms. 

Recall that a Delone set in $\bR^d$ is  a countable well-separated set which is also relatively dense. The following definition seems to us to give the most natural catch-all aperiodic setting in which to define zero mode spectra with respect to some reference basis, with no a priori assumptions.

\begin{definition}\label{d:deloneframework} A \emph{Delone  bar-joint framework}  in $\bR^d$ is a countable bar-joint framework $\G=(G,p)$, with $G=(V,E)$ a simple countable graph, such that the set of joints $p(v)$, for $v$ in $V$, is a Delone set, and the set of bar lengths $|p(v)-p(w)|$, for $vw$ in $ E$, is uniformly bounded. 
\end{definition}

Let $\bR^d$, with $d\geq 2$, be viewed as an ambient Euclidean space,  with its standard basis and points $x$ with coordinates $x=(x_1,\dots , x_d)$, and let $\ul{a}=(a_1, \dots , a_d)$ be a second basis. 
The \emph{cell partition} for $\ul{a}$ of $\bR^d$ is defined to be the partition $\P = \{C_k:k\in \bZ^d\}$ where $C_k$ is the parallelepiped
\[
C_k = [k_1a_1,(k_1+1)a_1)\times \dots \times [k_da_d,(k_d+1)a_d).
\] 

 For notational simplicity let $d=2$. 
A 
\emph{phase field} for $\P$ is a map $\phi=\phi_{\omega,\ul{a}}$ from  $ \bR^2$ to $\bT^2$ determined by the multiphase $\omega =(\omega_1,\omega_2) \in \bT^2$, where for $(x,y)$ in the cell $C_k$
we have $
\phi_{\omega,\ul{a}}(x,y)=\omega_1^{k_1}\omega_2^{k_2}.
$
Also, for a velocity vector $b$ in $\bC^2$, define an associated discontinuous $\bC^2$-valued \emph{phase-periodic velocity field} $\phi_{\omega,\ul{a}}\otimes b$ on $\bR^2$, with
\[
(\phi_{\omega,\ul{a}}\otimes b)(x,y) =\omega_1^{k_1}\omega_2^{k_2} b, \quad \mbox{ for }(x,y)\in C_k. 
\]
Note, for example, that if $(\omega_1,\omega_2)=(1,\lambda_2)$ then 
the restriction of this velocity field to the band $\bR a_1 \times [0, a_2)$ acts as translation by $b$. On the parallel band obtained by translation by  $na_2, n\in \bZ$, it acts as a constant velocity field given by the velocity vector $\lambda_2^nb$.

More generally, we define the matricial variant, $\phi_{\omega,\ul{a}}\otimes B$, for a matrix of vectors $B=(b_{l,m})$, for
$0\leq l \leq L-1, 0\leq m \leq M-1$. This is the velocity field on $\bR^2$ which assigns the velocity vector $b_{lm}$ to the $(l,m)$ subcell of a cell partition of $C_{(0,0)}$, and which is defined on the other cells $C_k$ and their corresponding subcell partitions, by phase-periodic extension. 
We refer to the map $\phi_{\omega,\ul{a}}\otimes B$ as a 
\emph{phase-periodic velocity field} 
for the triple $(\ul{a},L,M)$, and we refer to $B$ as the \emph{unit cell velocity vector matrix}. 

For a Delone bar-joint framework $\G$ in $\bR^2$ one can choose $L,M$ large enough so that each subcell contains at most one joint. In particular, for a crystal framework $\C$, with a specified periodicity basis  $\ul{a}$, it follows that every phase-periodic velocity field $u$, associated with the pair $({\omega}, \ul{a}),$ is the restriction of a matricial phase field $\phi_{\omega,\ul{a}}\otimes B$ 
to the joints $p_i$ of $\C$. With these assumptions, with fixed $L, M$,
we have the following formulation of the zero mode spectrum for the pair $\C, \ul{a}$;
\[
\Omega(\C,\ul{a}) = \{\omega\in\bT^2 : \exists u\in \F(\C,\bC)\backslash\{0\} \mbox{ and } B  \mbox{ with } u(p_i) =(\phi_{\ol{\omega},\ul{a}}\otimes B)(p_i), \forall p_i\},
\]
\[
{\bf K}(\C,\ul{a}) = \{(\gamma_1,\gamma_2)\in \bR^2:
\omega = (e^{2\pi \gamma_1i},e^{2\pi \gamma_2 i}) \in  \Omega(\C,\ul{a})\}.
\]

One can define the RUM spectrum of crystal frameworks in general dimensions in exactly the same way; for $d\geq 3$, the matricial phase field  $\phi_{\omega,\ul{a}}\otimes B$ is given by an array $B$ of multi-indexed vectors $b_m$, with the coordinates $m_i$ of $m$ occurring in the range
$0 \leq m_i\leq M_i-1$ according to a subcell partition of the unit cell $C_{(0,\dots,0)}$ of the $d$-fold partition for $\ul{a}$.

\subsection{Parallelogram frameworks and their phase fields}
\label{ss:G_Pandphasefields}
When $\ul{a}$ is not a periodicity basis, or when $\G$ is a general Delone framework, one can similarly define the sets
${\bf K}(\C,\ul{a})$, ${\bf K}(\G,\ul{a})$.
However, such strict analogues to the crystallographic definitions yield only limited multiphase data that is associated with bases that are \emph{commensurate} with $\ul{a}$ in the sense that they are equivalent to $\ul{a}$ in terms of a rational transformation in $GL(\bQ)$. To get a more general analogue it is natural to introduce phase fields for other partitions of the ambient space.

Consider, for example, a Penrose rhomb tiling $P_{\rm pen}$ and its Delone bar-joint framework $\G_{P_{\rm pen}}$. Recall that the \emph{ribbons} of ${P_{\rm pen}}$ are the 2-way infinite paths of pairwise adjacent tiles whose common edges have the same orientation. To each ribbon we may associate the straight line through the origin which is perpendicular to the tile-joining edges of the ribbon. There are 5 such lines and we call their union the \emph{ribbon figure} of ${P_{\rm pen}}$, denoted $RF({P_{\rm pen}})$. It can be shown that in fact a ribbon with line $H$ is $H$-localised. Also for every pair of ribbons which are $H$-localised there is an $H$-localised infinitesimal flex of $\G_{P_{\rm pen}}$ which is supported on the set of joints between them. This flex is an infinitesimal translation on its support. 

More generally, in our companion paper \cite{pow-qc1} we have considered parallelogram frameworks $\G_P$ for parallelogram tilings $P$ of the plane which are determined by a regular multigrid in the sense of De Bruijn \cite{deB} and Beenker \cite{bee}. Once again, ribbons are linearly localised although, for reasons of asymmetry, their directions, which are taken to \emph{define} the ribbon figure  $RF(P)$, need \emph{not} coincide with the perpendicular line directions. 
Also, we have defined a zero mode spectrum for parallelogram frameworks $G_P$ in terms of multivariable phase fields associated with partitions of the ambient space determined by the ribbons. In the case of a Penrose tiling the associated wave vectors are vectors in a 5-dimensional reciprocal space. In what follows below we take a quite different approach, valid for general Delone bar-joint frameworks, and which is based on standard parallelepiped partitions but for variable bases. The main idea, motivated by the crystallographic case, is that when there exist linearly localised infinitesimal flexes then, in the presence of some form of  aperiodic order one may expect that there exist infinitesimal flexes that are approximately phase-periodic.

\subsection{Ribbon shears and approximately phase-periodic flexes.}\label{ss:approxphaseperiodic} 
We give some observations for parallelogram frameworks which provide a motivation for the formulation of zero mode line spectra for general Delone frameworks in the plane.

For a regular multigrid parallelogram tiling, distinct ribbons
which are $H$-localised for the same line $H$ do not cross \cite{pow-qc1}. 
Also 
$H$-localised ribbons which are of the same component grid type 
are \emph{relatively dense} in the following sense. If $M$ is  any line not parallel to $H$ then the intersection of $M$ with this subset of ribbons is a relatively dense subset of $M$. Equivalently, there exists $c>0$ such that for every pair of parallel lines $z_1+H, z_2+H$, with separation greater than $c$, the closed band they define contains a ribbon of the component grid type. 

This structure of approximately  periodical occurring $H$-localised ribbons of the same type leads to the presence of infinitesimal flexes that are approximately phase-periodic for a phase line which is reciprocal to $H$.
More precisely, consider a periodic partitioning of $\bR^2$ by $H$-localised bands $B_n, n\in \bZ$, where the bands are semiopen and have a common width $d>c$. Choose ribbons $\rho_n$ in $ B_n$, of common grid type, and corresponding \emph{shearing flexes} $u^{\rho_n}$. These infinitesimal flexes have velocity vectors equal to zero on the joints on one side of the ribbon $\rho_n$ (relative to an orientation of $M$) and a fixed common velocity ${\bf b}$ on the other side. 
Thus every difference $u^{\rho_n}-u^{\rho_m}$, for $m<n$, has support contained in the union of the bands $B_m, B_{m+1}, \dots , B_n$, and the nonzero velocity vectors are equal to {\bf b}.

 For a positive integer $N$ the infinitesimal flexes $u^{\rho_{kN-1}} - u^{\rho_{(k-1)N+1}}$ have disjoint supports, lying between the bands $B_{(k-1)N}$ and $B_{kN}$, and so for each unimodular complex number $\lambda$ we may define the infinitesimal flex
\[
u^{N,\lambda} = \sum_{k\in \bZ} \lambda^k(u^{\rho_{kN-1}} - u^{\rho_{(k-1)N+1}}).
\]
For large $N$ the velocity field vectors for $u^{N,1}$  are ``most often" equal to {\bf b}, and in this sense $u^{N,1}$ is ``mostly close" to infinitesimal translation by {\bf b}. Moreover, the associated ``phase-modulated" velocity fields $u^{N,\lambda}$ are infinitesimal flexes that are close, in the same sense, to phase-periodic velocity fields. We make these connections more precise in the subsequent sections.

\subsection{Banded phase-fields and slippage flexes} 
Let $\ul{{\bf t}}=\{{\bf t_1,t_2}\}$ be a basis and let $\B=\{B_k, k\in \bZ\}$ be the partition of $\bR^2$ by the bands 
Then  $\phi_{(1,\lambda), \ul{\bf t}}$ is equal to $\lambda^k$ on $B_k$ and we refer to this as a \emph{banded phase field}.

Let $\tau_b$ be the velocity field on $\bR^2$ which is constant and equal to the vector $b\in \bR^2$. The restriction of
$\tau_b$ to the set of joints of a Delone framework $\G$ is a translational infinitesimal flex. The \emph{modulation} of $\tau_b$ by 
$\phi_{(1,\lambda), \ul{\bf t}}$ is the pointwise product
$$
(\phi_{(1,\lambda), \ul{\bf t}}\cdot\tau_b)(x,y)
=(\phi_{(1,\lambda), \ul{\bf t}}(x,y))(\tau_b(x,y)).
$$

In general, if $\phi$ is a real or complex scalar field on the ambient space and $u$ is a real or complex velocity field on a bar-joint framework, then we similarly define the velocity field $\phi\cdot u$, which we refer to as the \emph{modulation} of $u$ by $\phi$. 

\begin{definition}\label{d:slippage}
A \emph{slippage velocity field} (resp. \emph{slippage flex}) for a Delone framework $\G$ in $\bR^2$ is a velocity field $u\in \V(\G)$ (resp. $\F(\G)$) of the form $u = \chi_S\cdot \tau_b$, where $S$ is an $H$-localised set, for some line $H$, which contains the support of $u$.
\end{definition}

In particular, as observed in Section \ref{ss:approxphaseperiodic}, a Penrose tiling framework is rich in slippage flexes supported on sets of nodes between a pair of ribbons of the same type.

The following definition  
formalises our earlier indication of two velocity fields being ``mostly close". It requires that the proportion of joints where the local velocities differ by more than $\epsilon$, in Euclidean norm, can be arbitrarily small in all squares of a given size. 

\begin{definition}\label{d:epsilonN} 
Let $u, z$ be velocity  fields   on the set $\J$ of joints of a Delone bar-joint framework  in $\bR^2$ and let $\epsilon, N$ be positive.
Then  $u$ and  $z$ are \emph{$(\epsilon, N)$-close}  if 
\[
{|\{p\in \J:\|u(p)-z(p)\|_2>\epsilon\}\cap[-N,N]^2|}
<\epsilon {|\J\cap [-N,N]^2|}.
\]
Also  $u$ and  $z$ are \emph{uniformly $(\epsilon, N)$-close} if this inequality  holds with $[-N,N]^2$ replaced by any of its translates, $(a,b)+[-N,N]^2$, for $(a,b) \in \bR^2$, and  $u$ and $z$ are \emph{mostly $\epsilon$-close} if they are uniformly $(\epsilon, N)$-close for all $N\geq N_0$, for some $N_0$. 
\end{definition}

The next somewhat technical phase-field based definition of a \emph{periodic slippage line} and the \emph{periodic slippage figure} of a Delone framework in $\bR^2$ is motivated in part by the approximately phase-periodic flexes of parallelogram frameworks discussed in Section \ref{ss:approxphaseperiodic}. We show subsequently that one can also define this figure in terms of linearly localised flexes appearing in periodic bands. Both forms of the definition give useful insights. In particular the phase-field formulation is closer to the definition of the RUM spectrum and allows modification to more general variants.

One might note the important distinction in the next definition that we are considering velocity fields that are infinitesimal flexes and that are also approximately phase-periodic, rather than phase-periodic velocity fields that are approximate infinitesimal flexes in some sense.  

\begin{definition}\label{d:periodicslippageH}
Let $H= \bR{\bf t_1}$ be a line in $\bR^2$.  Then $H$ is a \emph{periodic slippage line} for the Delone bar-joint framework $\G$ in $\bR^2$ if there exists a nonzero velocity vector $b$ in $\bR^2$ and a vector ${\bf t_2}$, with $\ul{\bf t}=\{{\bf t_1},{\bf t_2}\}$ a basis, such that for every $\epsilon>0$ there exists an infinitesimal flex $u$ of $\G$ and a positive integer $M$ with the following properties.
\medskip

(i) The modulation $\phi_{(1,\lambda), \{{\bf t_1}, M{\bf t_2}\}}\cdot u$ is an infinitesimal flex, for all $\lambda \in \bT$.
\medskip

(ii) The modulation $\phi_{(1,\lambda), \{{\bf t_1}, M{\bf t_2}\}}\cdot u$ is mostly $\epsilon$-close
to the restriction of the velocity field 
$\phi_{(1,\lambda), \{{\bf t_1}, M{\bf t_2}\}}\cdot \tau_b$, for all $\lambda \in \bT$.
\medskip

Also, the \emph{periodic slippage figure}, $PSF(\G)$, is the line figure given by the union of the periodic slippage lines of $\G$.
 \end{definition}
 
A periodic slippage line $H$ is special in the following ways. 
If $M$ is sufficiently large then there exist nonzero $H$-localised infinitesimal flexes with disjoint supports in every band
\[ 
C_k=\bR {\bf t_1}\times [kM{\bf t_2},(k+1)M{\bf t_2}).
\]
To see this, let $\lambda_0,\dots , \lambda_{p-1}$ be the distinct $p^{th}$ roots of unity and let $u$ be an infinitesimal flex with the property (i) of Definition \ref{d:periodicslippageH}. Then  the infinitesimal flex
\[
w=\frac{1}{p}\sum_{j=0}^{p-1} \phi_{(1,\lambda_j),\{{\bf t_1},M{\bf t_2}\}}\cdot u
\]
has the property $\chi_S\cdot w = w$ where $S$ is the union of the bands $C_k,$ for  $k =0 $ mod $p$. Since $\G_P$ is a Delone framework there is an upper bound to the lengths of the bars and we may take $M$ so that $pM$ is greater than this upper bound. This implies that for each integer $k$ the velocity field $\chi_{C_k}\cdot w$ is an infinitesimal flex of $\G_P$. Also, $\chi_{C_k}\cdot w$ is mostly $\epsilon$-close to $\chi_{C_k}\cdot \tau_b$.

Observe next that the banded phase field 
$\ol{\lambda_j}\phi_{(1,\lambda_j),\{{\bf t_1},M{\bf t_2}\}}$ is equal to the $M{\bf t_2}$-translate of the phase field $\phi_{(1,\lambda_j),\{{\bf t_1},M{\bf t_2}\}}$. Repeating the averaging argument above, using the infinitesimal flexes $\ol{\lambda_j}\phi_{(1,\lambda_j), \{{\bf t_1}, M{\bf t_2}\}}\cdot u$, gives a similar nonzero infinitesimal flex $w'$ with support the union of the bands $C_k$ with $k =1$ mod $p$.
More generally (considering $\ol{\lambda_j}^l$ for $l = 2,\dots ,p-1$) we obtain similar nonzero infinitesimal flexes with support the union of the bands $C_k$ for any $k$. In this way we obtain a sequence of infinitesimal flexes $u_k$, indexed by integers $k$, with support in $C_k$, such that $\sum_ku_k$ is an infinitesimal flex which is mostly $\epsilon'$-close to $\tau_b$ where $\epsilon'=p\epsilon$.

We can now state the following equivalence of definitions for a periodic slippage line.

\begin{prop}\label{p:PSFequivalence}
Let $\G$ de a Delone bar-joint framework in $\bR^2$. Then the following are equivalent.
\medskip

(i) The line $H=\bR {\bf t_1}$ given by the vector ${\bf t_1}$ is a periodic slippage line.

\medskip

(ii) For any basis $\{{\bf t_1}, \bf{t_2}\}$ and $\epsilon >0$ there exists $M>0$  such that for each band $C_k$ of the band partition for
$\{{\bf t_1}, M\bf{t_2}\}$ there is an infinitesimal flex $u_k$, with support in $C_k$, such that the sum $\sum_k u_k$ is mostly $\epsilon$-close to the restriction of $\tau_b$ for some $b\in \bC^2$. 
\end{prop}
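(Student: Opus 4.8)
The plan is to distill one structural fact about property~(i) of Definition~\ref{d:periodicslippageH} and then read off both implications from it. Fix a basis $\{{\bf t_1},{\bf t_2}\}$, a positive integer $M$, the band partition $\{C_k\}$ of $\bR^2$ it determines, and abbreviate the banded phase field $\phi_{(1,\lambda),\{{\bf t_1},M{\bf t_2}\}}$ by $\psi_\lambda$. The fact I would prove first is: for a complex velocity field $u$ on the joints of $\G$, the modulation $\psi_\lambda\cdot u$ is an infinitesimal flex of $\G$ for every $\lambda\in\bT$ precisely when each piece $u_k:=\chi_{C_k}\cdot u$ is, by itself, an infinitesimal flex of $\G$; thus property~(i) is the assertion that $u$ splits as a sum $\sum_k u_k$ of band-localised $\G$-flexes. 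The verification is short: on a bar $vw$ with $v\in C_j$, $w\in C_{j'}$ and $e=p(v)-p(w)$, the first-order condition for $\psi_\lambda\cdot u$ reads $\lambda^{j}\langle u(v),e\rangle=\lambda^{j'}\langle u(w),e\rangle$. When $j=j'$ this is the flex condition for $u$ (equivalently for $u_j$) on $vw$; when $j\neq j'$, letting $\lambda$ sweep $\bT$ forces $\langle u(v),e\rangle=\langle u(w),e\rangle=0$, which is precisely the flex condition for each of $u_j,u_{j'}$ on $vw$ --- recall that a $\G$-flex supported in a single band must, by the flex condition, be orthogonal to every bar direction leaving that band. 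Reassembling, $\psi_\lambda\cdot u=\sum_k\lambda^k u_k$ is a flex for all $\lambda$ whenever every $u_k$ is.

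For (i)$\Rightarrow$(ii) --- essentially the content of the paragraphs preceding the statement, reorganised around this fact and extended to an arbitrary second basis vector --- I would argue as follows. Let $b\neq 0$ and ${\bf s}$ (with $\{{\bf t_1},{\bf s}\}$ a basis) be the data fixed by Definition~\ref{d:periodicslippageH}. Given $\epsilon>0$ and any basis $\{{\bf t_1},{\bf t_2}\}$, apply the definition with parameter $\epsilon/2$ to obtain a flex $u$ and an integer $M_0$ such that $\phi_{(1,\lambda),\{{\bf t_1},M_0{\bf s}\}}\cdot u$ is a flex for all $\lambda$ and, taking $\lambda=1$, $u$ is mostly $(\epsilon/2)$-close to $\tau_b$; by the fact, the pieces $\chi_{C_j}\cdot u$ relative to the bands $C_j$ of $\{{\bf t_1},M_0{\bf s}\}$ are flexes summing to $u$. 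Since $\{{\bf t_1},{\bf s}\}$ is a basis, ${\bf t_2}=c\,{\bf s}+d\,{\bf t_1}$ with $c\neq 0$, so the bands of $\{{\bf t_1},M{\bf t_2}\}$ are rescalings, through the same line at the origin, of those of $\{{\bf t_1},M_0{\bf s}\}$; for $M$ large each band $C'_k$ of $\{{\bf t_1},M{\bf t_2}\}$ is a union of full bands $C_j$ together with at most two straddling ones, so $u'_k:=\sum\{\chi_{C_j}\cdot u:\ C_j\subseteq C'_k\}$ is a flex supported in $C'_k$, and $\sum_k u'_k$ agrees with $u$ off the union of the straddling bands. A uniform density count --- legitimate because the joint set is Delone and the straddling bands form a periodic family --- shows this union carries a proportion $<\epsilon/2$ of the joints once $M$ is large, whence $\sum_k u'_k$ is mostly $\epsilon$-close to $\tau_b$. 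This is (ii).

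For (ii)$\Rightarrow$(i) --- the substantive direction --- I would fix once and for all a vector ${\bf t_2}$ completing ${\bf t_1}$ to a basis and take $b$ (read as a fixed nonzero vector) as in~(ii). Given $\epsilon>0$, (ii) furnishes $M>0$ and infinitesimal flexes $u_k$ supported in the bands $C_k$ of $\{{\bf t_1},M{\bf t_2}\}$ with $u:=\sum_k u_k$ mostly $\epsilon$-close to $\tau_b$; write $\psi_\lambda=\phi_{(1,\lambda),\{{\bf t_1},M{\bf t_2}\}}$ for this $M$. By the fact, $\psi_\lambda\cdot u=\sum_k\lambda^k u_k$ is an infinitesimal flex for every $\lambda\in\bT$, which is property~(i) of Definition~\ref{d:periodicslippageH}. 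For property~(ii), on each band $C_k$ the fields $\psi_\lambda\cdot u$ and $\psi_\lambda\cdot\tau_b$ are both multiplied by the same unimodular scalar $\lambda^k$, so $\|(\psi_\lambda\cdot u)(p)-(\psi_\lambda\cdot\tau_b)(p)\|=\|u(p)-\tau_b(p)\|$ at every joint $p$ and every $\lambda$. Hence the set of joints at which the two modulated fields differ by more than $\epsilon$ is independent of $\lambda$ and coincides with the corresponding set for $(u,\tau_b)$, so the hypothesis that $u$ is mostly $\epsilon$-close to $\tau_b$ transfers verbatim. Thus $H=\bR{\bf t_1}$ is a periodic slippage line.

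The linchpin is the equivalence in the first paragraph; proving it is quick, but everything rests on reading property~(i) correctly as ``$u$ decomposes into band-localised genuine flexes''. Given that, both implications are essentially formal. The residual work is bookkeeping: in (i)$\Rightarrow$(ii) one passes from the distinguished vector ${\bf s}$ of the definition to an arbitrary second basis vector and checks that the reassembly does not spoil the ``mostly $\epsilon$-close'' estimate --- routine given the uniform density of a Delone set, although, as ``mostly close'' is not a metric, the $\epsilon/2+\epsilon/2$ splitting should be carried out explicitly --- and one should record that $b$ in~(ii) must be understood as a fixed nonzero vector, in line with Definition~\ref{d:periodicslippageH}.
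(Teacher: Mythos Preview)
Your proof is correct and in several respects cleaner than the paper's. The main difference lies in how you unpack property~(i) of Definition~\ref{d:periodicslippageH}. The paper, in the paragraphs preceding the proposition, averages the modulations $\phi_{(1,\lambda_j),\{{\bf t_1},M{\bf t_2}\}}\cdot u$ over the $p^{\rm th}$ roots of unity to produce a flex supported on every $p^{\rm th}$ band, and then invokes the Delone bound on bar lengths (choosing $pM$ larger than this bound) so that the restriction to a single band is still a flex; it then iterates with phase shifts to populate the remaining bands. Your structural lemma---that $\psi_\lambda\cdot u$ is a flex for every $\lambda\in\bT$ if and only if each $\chi_{C_k}\cdot u$ is a flex---bypasses this entirely: letting $\lambda$ sweep $\bT$ forces the orthogonality $\langle u(v),e\rangle=\langle u(w),e\rangle=0$ on any bar crossing a band boundary, so the band restrictions are flexes immediately, with no appeal to bar-length bounds or averaging. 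This is genuinely more direct.

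You also attend to two points the paper leaves implicit. First, statement~(ii) quantifies over \emph{all} bases $\{{\bf t_1},{\bf t_2}\}$, whereas Definition~\ref{d:periodicslippageH} supplies only one distinguished second vector; the paper's ``has already been shown'' does not explicitly bridge this, but your reassembly argument (grouping the $M_0{\bf s}$-bands inside the wider $M{\bf t_2}$-bands and controlling the straddling remainder via the Delone density) does. Second, you make explicit that the unimodular modulation preserves the pointwise discrepancy $\|u(p)-\tau_b(p)\|$, so the ``mostly $\epsilon$-close'' condition transfers verbatim to every $\lambda$; the paper asserts that $u,M,b$ satisfy both requirements of the definition without spelling this out. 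Your closing remark that $b$ in~(ii) must be read as fixed (independent of $\epsilon$) is also a fair observation about the statement; the paper's proof of (ii)$\Rightarrow$(i) tacitly makes the same reading.
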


\begin{proof}
That (i) implies (ii) has already been shown.  Assume that (ii) holds for $\epsilon$ and $M$, and  $u=\sum_k u_k$ so that $u$ and $\tau_b$
 are mostly $\epsilon$-close. Observe that $\phi_{(1,\lambda), \{{\bf t_1}, M{\bf t_2}\}}\cdot u$ is an infinitesimal flex, for each $\lambda \in \bT$ (since it is an infinite sum of infinitesimal flexes of the form $\alpha_ku_k)$. It follows that $u, M, b$
satisfy both requirements of Definition \ref{d:periodicslippageH}. 
\end{proof}

\begin{thm}\label{t:slippageequalsribbon}
Let $\G_P$ be the parallelogram framework of a regular multigrid $P$. Then  the periodic slippage figure $PSF(\G_P)$ is equal to the ribbon figure $RF(P)$. 
\end{thm}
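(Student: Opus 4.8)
The plan is to establish the two inclusions $PSF(\G_P)\subseteq RF(P)$ and $RF(P)\subseteq PSF(\G_P)$ separately, using Proposition \ref{p:PSFequivalence} to pass freely between the phase-field formulation and the ``banded localised flex'' formulation of a periodic slippage line.

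For the inclusion $RF(P)\subseteq PSF(\G_P)$, let $H$ be one of the ribbon directions of $P$, say determined by the $H$-localised ribbons of a fixed component grid type. The key structural input, recalled in Section \ref{ss:approxphaseperiodic} from \cite{pow-qc1}, is that these ribbons do not cross and are \emph{relatively dense}: there exists $c>0$ so that every band parallel to $H$ of width $>c$ contains such a ribbon. Pick any ${\bf t_2}$ so that $\{{\bf t_1},{\bf t_2}\}$ is a basis, and given $\epsilon>0$ choose $M$ large enough that $M{\bf t_2}$ has $H$-perpendicular extent exceeding $c$ plus (twice) the uniform bar-length bound. In each band $C_k$ of the partition for $\{{\bf t_1},M{\bf t_2}\}$ select a ribbon $\rho_k$ of the fixed type sitting strictly inside $C_k$, with associated shearing flex $u^{\rho_k}$ whose nonzero velocity vectors all equal a fixed ${\bf b}$; a width slightly larger than $c$ suffices because one may take the flex supported just on joints between $\rho_k$ and the adjacent band boundary, which is a bounded-distance set, hence $H$-localised, and since the band is wider than the bar bound the support does not reach bars crossing into $C_{k\pm1}$, so $\chi_{C_k}\cdot u^{\rho_k}$ is itself an infinitesimal flex. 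Then $u=\sum_k \chi_{C_k}\cdot u^{\rho_k}$ is an infinitesimal flex that agrees with $\tau_{\bf b}$ except on the joints ``behind'' each $\rho_k$; by relative density of the chosen ribbon type one can, as in Section \ref{ss:approxphaseperiodic}, arrange (by taking $M$ large, or by iterating over several consecutive bands and averaging) that the proportion of such exceptional joints in every large square is $<\epsilon$, i.e.\ $u$ is mostly $\epsilon$-close to $\tau_{\bf b}$. By Proposition \ref{p:PSFequivalence}(ii) $\Rightarrow$ (i), $H\in PSF(\G_P)$.

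For the reverse inclusion $PSF(\G_P)\subseteq RF(P)$, suppose $H=\bR{\bf t_1}$ is a periodic slippage line. By Proposition \ref{p:PSFequivalence} and the averaging discussion preceding it, for every $\epsilon>0$ (with suitable $M$) there is a sequence of nonzero infinitesimal flexes $u_k$ with disjoint supports in the bands $C_k$, such that $\sum_k u_k$ is mostly $\epsilon$-close to $\tau_{\bf b}$ for some nonzero $b$. The point is to show that such a flex forces $H$ to be a ribbon direction. Here I invoke the characterisation of infinitesimal flexes of parallelogram frameworks from the companion paper \cite{pow-qc1}: a nonzero infinitesimal flex of $\G_P$ that is linearly localised and ``mostly'' a nonzero translation is, up to corrections on a small-proportion set of joints, built from ribbon shears; concretely, each $u_k$, being $H$-localised, nonzero, and close on a positive proportion of its band to translation by $b\neq 0$, must involve a shear across at least one ribbon that is itself $H$-localised, which is only possible when $H$ is one of the finitely many ribbon directions. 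Thus $H\in RF(P)$.

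The main obstacle is the second inclusion, and within it the step that converts ``a linearly localised infinitesimal flex that is mostly a nonzero translation'' into ``$H$ is a ribbon direction.'' This is where the explicit description of $\F(\G_P)$ from \cite{pow-qc1} does the real work: one needs that the only way a translation-like flex can be confined to a band transverse to $H$ is by being damped out across ribbons perpendicular-or-specified to $H$, so that a band containing no such ribbon admits no nonzero $H$-localised flex asymptotic to a translation. Making the ``mostly $\epsilon$-close, all $\epsilon$'' hypothesis interact cleanly with the discrete combinatorial geometry of the multigrid — in particular ruling out flexes that are localised to $H$ for non-ribbon $H$ by some conspiracy of small exceptional sets as $\epsilon\to0$ — is the delicate point, and I expect it to be handled by extracting, in the $\epsilon\to0$ limit, an honest nonzero $H$-localised slippage flex on which \cite{pow-qc1}'s classification applies directly.
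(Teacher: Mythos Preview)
Your approach matches the paper's in outline---two inclusions, with $RF(P)\subseteq PSF(\G_P)$ coming from the ribbon-shear construction of Section~\ref{ss:approxphaseperiodic} and $PSF(\G_P)\subseteq RF(P)$ coming from the averaging argument plus the classification in \cite{pow-qc1}. The first inclusion is fine and essentially what the paper does.

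For the reverse inclusion you have made the argument harder than it needs to be. Once the averaging argument (the discussion preceding Proposition~\ref{p:PSFequivalence}) produces a \emph{single} nonzero $H$-localised infinitesimal flex, you are done: the result you need from \cite{pow-qc1} (cited in the paper as Theorem~2.8) asserts that \emph{any} nonzero $H$-localised infinitesimal flex of $\G_P$ forces $H$ to be a ribbon direction. It does not require the flex to be of slippage type, nor mostly close to a translation. So there is no ``delicate point'' about making the mostly-$\epsilon$-close hypothesis interact with the multigrid geometry, no conspiracy of exceptional sets to rule out, and no $\epsilon\to 0$ limiting extraction needed. The $\epsilon$-closeness condition in Definition~\ref{d:periodicslippageH} is used only to ensure that the averaged flex is nonzero (since $b\neq 0$), not to feed any translation-like structure into the classification. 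The paper's proof of this direction is accordingly one sentence: averaging gives a nonzero $H$-localised flex, and \cite{pow-qc1} then forces $H\in RF(P)$.
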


\begin{proof}
The discussion in sections \ref{ss:G_Pandphasefields} and \ref{ss:approxphaseperiodic} show that each line $H$ in the ribbon figure $RF(P)$ satisfies the requirements of a periodic slippage line.
On the other hand if $H$ is a periodic slippage line then by the averaging argument above there exists a nonzero $H$-localised infinitesimal flex. By \cite{pow-qc1} $H$ is necessarily a line in $RF(P)$.
\end{proof}

\begin{example}\label{e:weakbracedpenrose}
Consider a Penrose tiling framework $\G_P$ which is augmented with bars and joints, for every tile, with the geometry shown in Figure \ref{f:2tilesAugmented}. The resulting framework $\G_P^+$ has essentially the same infinitesimal flex space in that the restriction map $\F(\G_P^+) \to \F(\G_P)$ is an isomorphism. Note however that $\G_P^+$ has no slippage flexes. Nevertheless this augmented framework has $H$-localised infinitesimal flexes where the nonzero velocities at the joints are mostly equal. The variation occurs for the newly added degree 2 joints that are on the boundary of the support. Thus, by taking sufficiently wide support bands we may find infinitesimal flexes $u_k$, as in Proposition \ref{p:PSFequivalence}(ii). In this way it follows that $PSF(\G_P^+)=PSF(\G_P)=RF(P)$.

\begin{center}
\begin{figure}[ht]
\centering
\includegraphics[width=7cm]{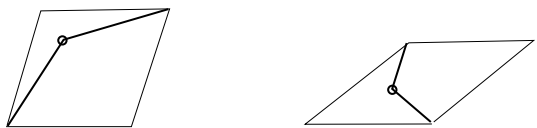}
\caption{Added joints and bars.}
\label{f:2tilesAugmented}
\end{figure}
\end{center}
\end{example}


\begin{example}
\label{e:kagomeNoSlippage} 
We note that the kagome framework $\C_{\rm kag}$ has no periodic slippage lines. 
It is known that every infinitesimal flex is a unique infinite linear combination of basic infinitesimal flexes, whose support joints lie in a line and whose velocities have two alternating directions \cite{bad-kit-pow-2}, \cite{pow-poly}. If $z$ is a nonzero $H$-localised infinitesimal flex then it follows readily from this fact that $z$ is a finite linear combination of these basic flexes where the support sets are disjoint and parallel. Since the velocity vectors alternate it follows that for sufficiently small $\epsilon$ no sum $\sum_{k\in \bZ} z_k$ of $H$-localised flexes can be mostly $\epsilon$-close to a translation infinitesimal flex.  The linear figure $PSF(\C_{\rm kag})$ is therefore equal to the empty set.
\end{example}

\begin{definition}\label{d:slippagespec} Let $\G$ be a Delone framework in $\bR^2$ and let $\ul{a}$ be a basis. 
\medskip

(1) The \emph{unreduced slippage spectrum} ${\bf L}_{\rm slip}(\G,\ul{a})$ is the line figure in the reciprocal space of $\ul{a}$
given by $PSF(\G)^{\ul{a}}= \theta_{\ul{a}}(PSF(\G))$.
\medskip

(ii) The \emph{reduced slippage spectrum} is the subset of $[0,1)^2$ given by 
\[
\Omega^{\rm log}_{\rm slip}(\G,\ul{a})= {\bf L}_{\rm slip}(\G,\ul{a})/\bZ^2 = PSF(\G)^{\ul{a}}/\bZ^2.
\]
\end{definition}

From Theorem \ref{t:slippageequalsribbon} we immediately obtain the following.

\begin{thm}\label{t:slippageSpecandRibbonFig}
Let $\G_P$ be a regular multigrid parallelogram bar-joint framework and let $\ul{a}$ be a basis for $\bR^2$. Then 
\[
\Omega^{\rm log}_{\rm slip}(\G_P,\ul{a}) = RF(P)^{\ul{a}}/\bZ^2.
\]
\end{thm}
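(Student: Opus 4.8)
The plan is to derive Theorem \ref{t:slippageSpecandRibbonFig} as an essentially immediate consequence of Theorem \ref{t:slippageequalsribbon} together with Definition \ref{d:slippagespec}, tracking how the $\theta_{\ul a}$ and quotient-by-$\bZ^2$ operations interact. First I would recall that, by Definition \ref{d:slippagespec}(i), the unreduced slippage spectrum is by definition ${\bf L}_{\rm slip}(\G_P,\ul a) = \theta_{\ul a}(PSF(\G_P))$, and that by Definition \ref{d:slippagespec}(ii) the reduced slippage spectrum $\Omega^{\rm log}_{\rm slip}(\G_P,\ul a)$ is the image of ${\bf L}_{\rm slip}(\G_P,\ul a)$ in $[0,1)^2$ under the quotient map $\bR^2_{\bf k}\to \bR^2_{\bf k}/\bZ^2$. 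Then I would invoke Theorem \ref{t:slippageequalsribbon}, which identifies the periodic slippage figure with the ribbon figure: $PSF(\G_P) = RF(P)$.

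The single substantive step is then to substitute this equality into the definitions: applying $\theta_{\ul a}$ to both sides of $PSF(\G_P)=RF(P)$ gives ${\bf L}_{\rm slip}(\G_P,\ul a) = \theta_{\ul a}(PSF(\G_P)) = \theta_{\ul a}(RF(P)) = RF(P)^{\ul a}$, using the notational convention $\M^{\ul a}:=\theta_{\ul a}(\M)$ for a line figure $\M$, as introduced before Corollary \ref{c:dualline} and reused in Definition \ref{d:slippagespec}. Passing to the reduced spectrum, I would take the quotient of this equality of subsets of $\bR^2_{\bf k}$ by $\bZ^2$, yielding
\[
\Omega^{\rm log}_{\rm slip}(\G_P,\ul a) = {\bf L}_{\rm slip}(\G_P,\ul a)/\bZ^2 = RF(P)^{\ul a}/\bZ^2,
\]
which is exactly the claimed formula.

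Strictly speaking, there is a small well-definedness point worth a sentence: $\theta_{\ul a}$ is a map between projective spaces (equivalently, a prescription on lines through the origin), so one should note that it is applied line-by-line, and that $RF(P)$ is indeed a line figure (a finite union of lines through the origin recording the ribbon directions of $P$, per the discussion in Section \ref{ss:G_Pandphasefields}), so that $\theta_{\ul a}(RF(P))$ makes sense as a line figure in $\bR^2_{\bf k}$ and its further quotient by $\bZ^2$ is the reduced form. The only place any genuine mathematical content is hidden is inside Theorem \ref{t:slippageequalsribbon} itself (which in turn leans on the flex characterisation from the companion paper \cite{pow-qc1}); once that is granted, the present statement is pure bookkeeping of definitions. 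I therefore expect no real obstacle here — the ``hard part'' has already been discharged in Theorem \ref{t:slippageequalsribbon}, and the remaining task is just to chase the equality through the two layers of Definition \ref{d:slippagespec}.
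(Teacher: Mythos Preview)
Your proposal is correct and matches the paper's own approach exactly: the paper states that the theorem follows immediately from Theorem~\ref{t:slippageequalsribbon}, and your argument simply unwinds Definition~\ref{d:slippagespec} after substituting $PSF(\G_P)=RF(P)$, which is precisely what is intended.
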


\begin{example}\label{e:basicgridslippage}
The most elementary parallelogram tiling bar-joint framework is the grid framework $\C_{\bZ^2}$. For the standard periodicity basis $\ul{a}=\{(1,0),(0,1)\}$ the reduced slippage spectrum $\Omega^{\rm log}_{\rm slip}(\C_{\bZ^2},\ul{a})$   is the union of the two line segments $[1,0)\times \{0\}$ and $\{0\}\times [0,1)$. One can check that this set is also the logarithmic form of the RUM spectrum $\Omega(\C_{\bZ^2},\ul{a})$. See also Theorem \ref{t:slippageEqualsRUMforG_P}. 

The slippage spectrum relative to a general basis  $\ul{a}'=\{(\alpha_1,\beta_1), (\alpha_2,\beta_2)\}$ may also be computed. By Lemma \ref{l:changebasisformula}(ii) ${\bf L}_{\rm slip}(\C_{\bZ^2},\ul{a}')$ is the union of the lines
\[
Z\bR(1,0)= \bR(\alpha_1, \alpha_2), \quad  Z\bR(0,1)= \bR(\beta_1, \beta_2).
\]
Thus $\Omega^{\rm log}_{\rm slip}(\C_{\bZ^2},\ul{a})$, which is the periodic reduction of this set, is a compact set if and only if the vectors $(\alpha_1, \alpha_2), (\beta_1, \beta_2)$ do not have  directions with irrational gradients. 
\end{example}

\begin{example}\label{e:penroseslippage}
For a Penrose rhomb tiling $P$ for a regular pentagrid \cite{deB}, \cite{pow-qc1} the ribbon figure $RF(P)$ consists of 5 lines through the origin with 10-fold symmetry. In particular there is a line with irrational slope. For the standard basis $\ul{b}=\{(1,0),(0,1)\}$ the reciprocal figure $RF(P)^{\ul{b}}$ consists of 5 lines including a line of irrational slope and so the slippage spectrum  $\Omega^{\rm log}_{\rm slip}(\G_P,\ul{b})$ is a noncompact dense set. Figure \ref{f:penroseslippagespec} (b) is indicative of this. 
\begin{center}
\begin{figure}[ht]
\centering
\includegraphics[width=5cm]{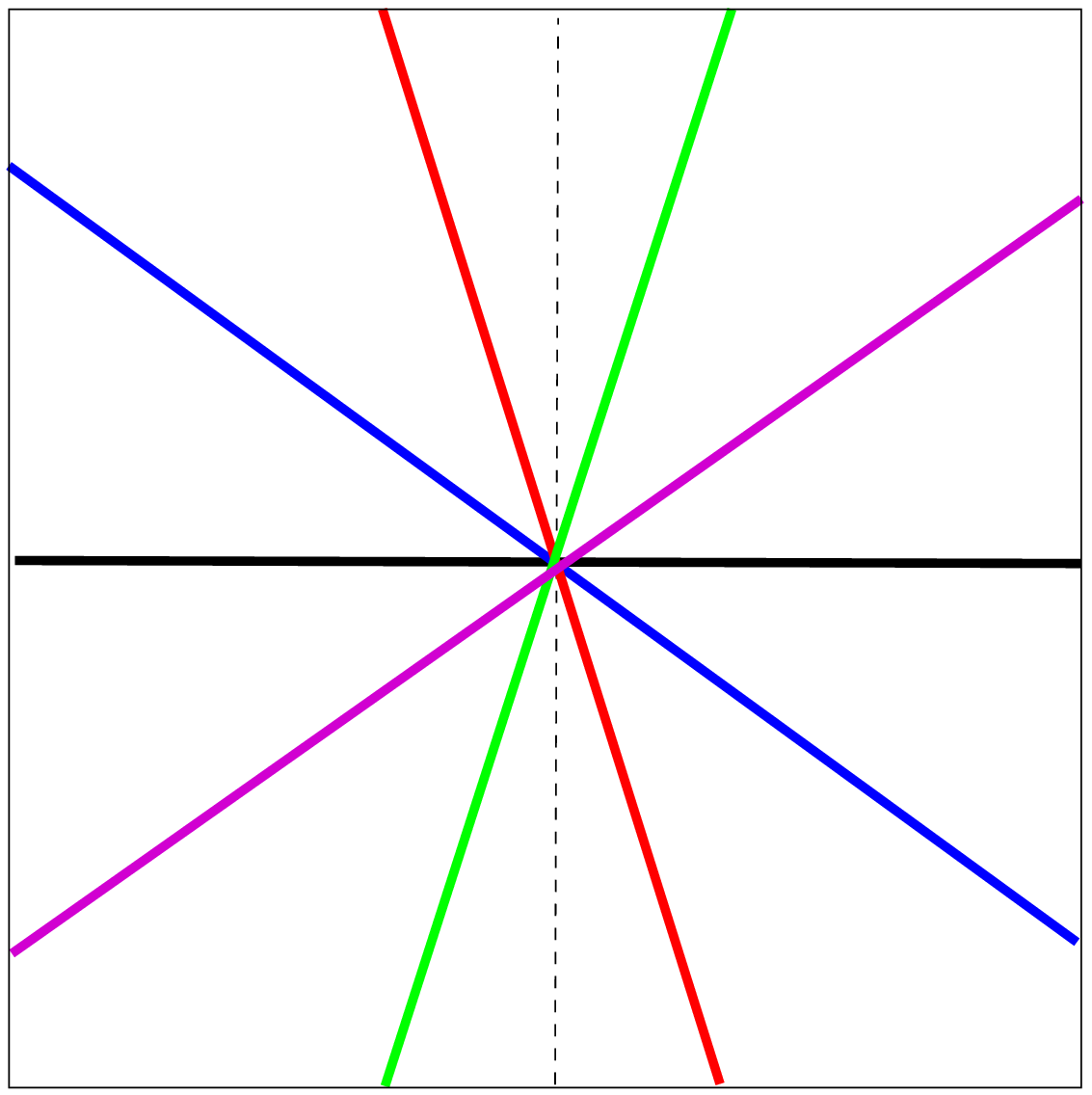}\quad \quad 
\includegraphics[width=5cm]{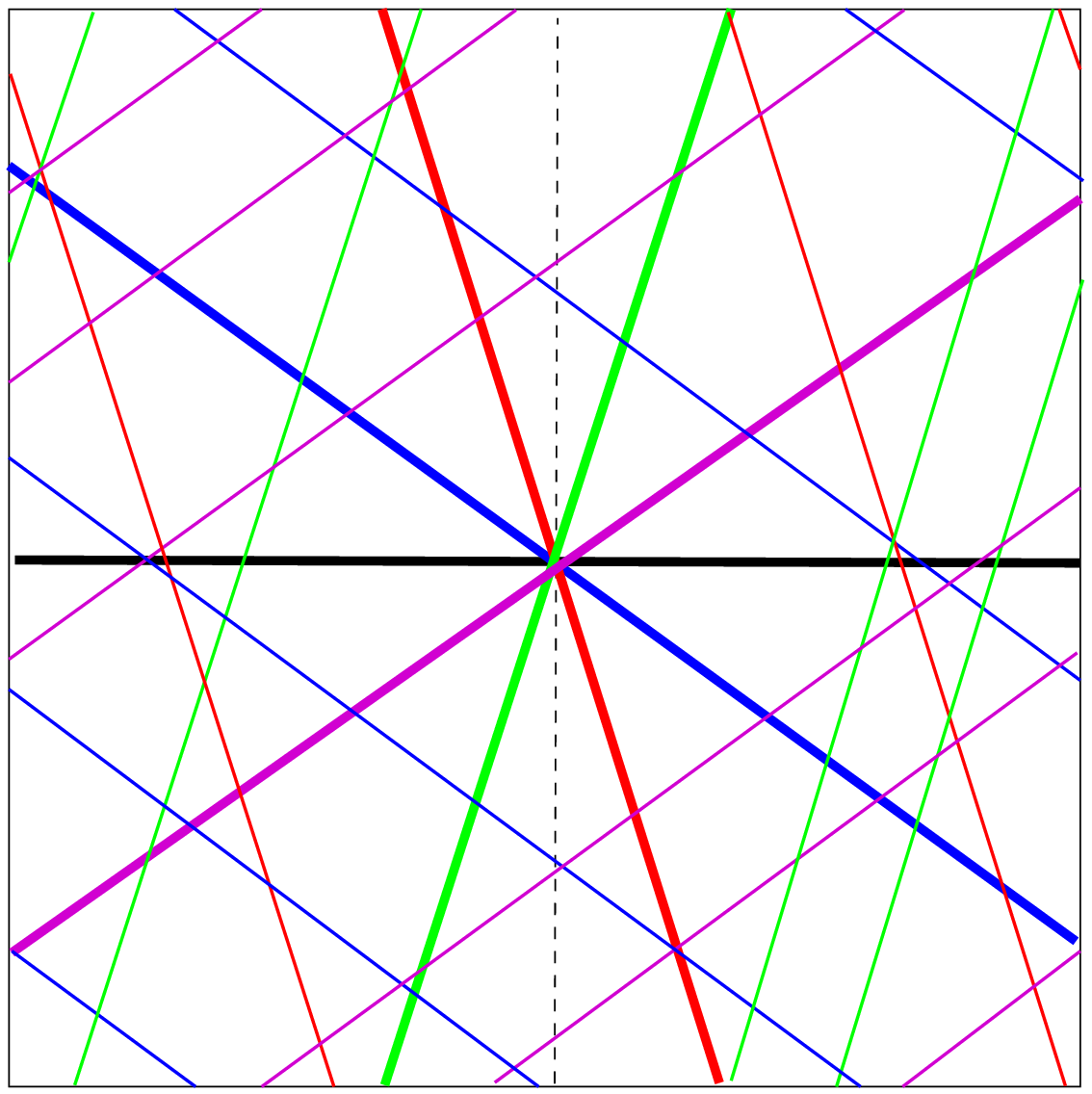}
\caption{ (a) Part of the reciprocal ribbon figure $RF(P)^{\ul{b}}$ of a Penrose tiling $P$ with respect to the standard basis $\ul{b}$. (b) Some of the dense set of lines in the reduced slippage spectrum of $\G_P$, translated to $[-1/2,1/2)^2$.}
\label{f:penroseslippagespec}
\end{figure}
\end{center}
\end{example}

Consider a rational approximation to a Penrose rhomb tiling $P$ by a periodic parallelogram tiling $P'$.  As is well known, one can construct such approximants by the projection method \cite{baa-gri}, \cite{ent-kle-pav}, \cite{gah-rhy}, \cite{ste-lub}.  
For definiteness let us say that the tilings $P, P'$, as closed sets in the plane, are $(\epsilon, N)$-close if  $P\cap [-N,N]^2$ and $P'\cap [-N,N]^2$ are $\epsilon$-close in the Hausdorff metric. It follows readily from the geometric realisation of the slippage spectrum ${\bf L}_{\rm slip}(\G_P,\ul{a})$ that we have the following spectral approximation property for this metric. For a fixed reference basis $\ul{a}$ of the ambient space and for each $\epsilon>0$ there exist $N$ and  a periodic approximant $P'$ such that ${\bf L}_{\rm slip}(\G_P,\ul{a})$
and ${\bf L}_{\rm slip}(\G_{P'},\ul{a})$ are $(\epsilon, N)$-close.
We also remark that computing the RUM spectrum of periodic approximants to Penrose frameworks gives images akin to Figure \ref{f:penroseslippagespec}(b) \cite{ste}. 

\begin{thm}\label{t:slippageEqualsRUMforG_P}
Let  $\G_P$ be a regular multigrid parallelogram framework which is periodic for a basis $\ul{a}$ and so is equal to a crystallographic  framework.
Then the unreduced zero mode spectrum ${\bf K}(\G_P,\ul{a})$ is equal to the union of ${\bf L}_{\rm slip}(\G_P,\ul{a})$ and its integral translates. In particular 
\[
\Omega^{\rm log}(\G_P,\ul{a})=\Omega^{\rm log}_{\rm slip}(\G_P,\ul{a}).
\]
\end{thm}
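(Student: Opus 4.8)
The plan is to prove the theorem by showing two inclusions between line figures, using the characterisation results already established. The key reductions are Theorem \ref{t:slippageequalsribbon}, which identifies $PSF(\G_P)$ with the ribbon figure $RF(P)$, and Theorems \ref{t:LFFandLF} and \ref{t:bandconverseDim2} together with Corollary \ref{c:dualline}, which set up the bijective correspondence $\theta_{\ul{a}}$ between rational lines carrying $H$-localised phase-periodic flexes and lines in $LF({\bf K}(\G_P,\ul{a}))$. Since $\G_P$ is now assumed periodic for $\ul{a}$, it is a genuine crystal framework and all of Section \ref{s:crystallographic} applies. By Definition \ref{d:slippagespec}(1), ${\bf L}_{\rm slip}(\G_P,\ul{a}) = \theta_{\ul{a}}(PSF(\G_P)) = \theta_{\ul{a}}(RF(P))$, so the statement reduces to proving $LF({\bf K}(\G_P,\ul{a})) = \theta_{\ul{a}}(RF(P))$, and then noting that ${\bf K}(\G_P,\ul{a})$ is the union of its line figure's realisation through lattice translates --- which is exactly the periodic-extension structure recorded after Corollary \ref{c:dualline}.

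First I would dispose of the direction $\theta_{\ul{a}}(RF(P)) \subseteq LF({\bf K}(\G_P,\ul{a}))$. For each line $H \in RF(P)$, Theorem \ref{t:slippageequalsribbon}'s proof (via the averaging argument preceding Proposition \ref{p:PSFequivalence}) produces a nonzero $H$-localised infinitesimal flex $z$ of $\G_P$. Because $\G_P$ is periodic for $\ul{a}$, the ribbon directions of $P$ are rational with respect to $\ul{a}$ --- this needs a short argument: the periodicity forces the multigrid to be commensurate, so each ribbon line is spanned by an integer combination of $a_1,a_2$; alternatively one invokes that $H$-localised flexes exist only along rational lines when the RUM spectrum is proper, and handles the improper case separately. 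Averaging $z$ over a lattice period along $H$ (as in the proof of Theorem \ref{t:orderN}, (i) implies (ii)) yields an $H$-localised \emph{periodic} flex, hence phase-periodic with some phase factor, so $H$ lies in $LPFF(\G_P)$ and Corollary \ref{c:dualline} gives $\theta_{\ul{a}}(H) = H^{\ul{a}}$ as a line in $LF({\bf K}(\G_P,\ul{a}))$.

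Conversely, for $LF({\bf K}(\G_P,\ul{a})) \subseteq \theta_{\ul{a}}(RF(P))$: if the zero mode spectrum is proper, Theorem \ref{t:LFFandLF} gives $LF({\bf K}(\G_P,\ul{a})) = \theta_{\ul{a}}(LPFF(\G_P))$, so it suffices to show every rational line $H$ admitting a nonzero $H$-localised phase-periodic flex of $\G_P$ lies in $RF(P)$. Here I would invoke the companion paper \cite{pow-qc1}: the classification of infinitesimal flexes of parallelogram frameworks shows that any $H$-localised flex forces $H$ to be a ribbon direction, exactly as used in the last line of the proof of Theorem \ref{t:slippageequalsribbon}. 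This closes the proper case. For the improper case ${\bf K}(\G_P,\ul{a}) = \bR^2_{\bf k}$: by Lemma \ref{l:bandconverseOrderN} there is a nonzero local (finitely supported) infinitesimal flex; but a finitely supported flex is $H$-localised for \emph{every} line $H$, which by \cite{pow-qc1} would force every direction to be a ribbon direction --- contradicting the finiteness of $RF(P)$ unless $\G_P$ has no ribbons at all, a degenerate case one excludes or checks directly. Hence the improper case does not occur for a genuine regular multigrid parallelogram framework.

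Finally, having established $LF({\bf K}(\G_P,\ul{a})) = {\bf L}_{\rm slip}(\G_P,\ul{a})$, I would upgrade this from line figures to the spectra themselves. Since ${\bf K}(\G_P,\ul{a})$ is a real algebraic set that is a periodic extension of $\Omega^{\rm log}(\G_P,\ul{a}) \subseteq [0,1)^2$, and since each maximal linear component through the origin is a whole line (by the band-construction of Proposition \ref{p:dualline}, any point on such a line is attained), ${\bf K}(\G_P,\ul{a})$ equals the union of the lines of its line figure together with their $\bZ^2$-translates; the same holds for ${\bf L}_{\rm slip}(\G_P,\ul{a})$ by definition of the reduced slippage spectrum. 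Passing to the quotient by $\bZ^2$ then yields $\Omega^{\rm log}(\G_P,\ul{a}) = \Omega^{\rm log}_{\rm slip}(\G_P,\ul{a})$. The main obstacle I anticipate is the rationality step in the first inclusion and the careful treatment of the improper/degenerate case in the second: one must be sure that periodicity of $\G_P$ genuinely forces the ribbon lines to be rational and that a bona fide regular multigrid parallelogram framework never carries a finitely supported flex, both of which lean on the structural results of \cite{pow-qc1} rather than on anything internal to this paper.
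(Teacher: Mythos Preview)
Your proposal is correct and follows essentially the same route as the paper: rule out the improper case via \cite{pow-qc1} (the paper cites Corollary 2.4 there directly for the absence of local flexes; your contradiction argument via finiteness of $RF(P)$ achieves the same), then use Theorem \ref{t:bandconverseDim2} together with the flex characterisation in \cite{pow-qc1} for one inclusion, and build an IFM from a periodic ribbon-shear flex for the other. The only organisational differences are that the paper handles rationality of ribbon directions more directly---periodicity of $\G_P$ for $\ul{a}$ makes each ribbon periodic under a rational vector $b_1$, so $H=\bR b_1$ immediately---and constructs the IFM in an auxiliary periodicity basis $\ul{b}=\{b_1,b_2\}$ aligned with the ribbon before invoking Lemma \ref{l:changebasisformula}(ii) to transfer to $\ul{a}$, rather than routing through Theorem \ref{t:LFFandLF} and $LPFF$ as you do.
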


\begin{proof} By Corollary 2.4 of Power \cite{pow-qc1} the framework $\G_P$ has no local infinitesimal flexes and so, by Theorem \ref{t:orderN}, the RUM spectrum is proper. 
Let $L$ be a line in ${\bf K}(\C,\ul{a})$ which is parallel to  $H^{\ul{a}}$ for a line $H$ in $\bR^2$ through the origin. This is necessarily a rational line and so by Theorem \ref{t:bandconverseDim2} there exists an $H$-localised infinitesimal flex. It follows from Theorem 2.8 of \cite{pow-qc1} that $H$ is a line of the ribbon figure.

On the other hand let $H$ be a line in the ribbon figure with reciprocal line $H^{\ul{a}}$. 
Since $\ul{a}$ is a periodicity basis for $\G_P$ it follows that each $H$-localised ribbon is periodic with respect to a rational vector $b_1$ for $\ul{a}$ and that $H=\bR b_1$. By the discussion in Section \ref{ss:approxphaseperiodic} the framework $\G_P$ has an $H$-localised infinitesimal flex  of translational type, $z$ say, and this flex is periodic with respect to translation by $b_1$. Let $\ul{b}=\{b_1, b_2\}$ be a basis which is a periodicity basis for $\G_P$. Then, as in Proposition \ref{p:dualline}, we may construct the IFM
\[
u = \sum_{k_2\in \bZ} \lambda_2^{-k_2}T_{b_2}^{k_2}z, 
\]
for any $\lambda_2$ in  $\bT$. It follows that the line
$H^{\ul b}$ lies in ${\bf K}(\C,\ul{b})$. We have $H^{\ul a}= \theta_{\ul{a}}(\theta_{\ul{b}})^{-1}H^{\ul b}$ and so, by Lemma \ref{l:changebasisformula}(ii)  this line lies in ${\bf K}(\G_P, \ul{a})$. 
\end{proof}

\subsection{The limit zero mode spectrum} In spite of Example \ref{e:weakbracedpenrose} it is clear from Example \ref{e:kagomeNoSlippage} that Definition \ref{d:periodicslippageH} is quite restrictive in that the approximating phase-periodic velocity fields are based on modulations of the constant field $\tau_b$. 
To define a larger ambient space line figure we now relax the slippage line definition, replacing the velocity field $\phi_{(1,\lambda), \{{\bf t_1, Mt_2}\}}\cdot \tau_b$ with a general matricial velocity field
$\phi_{(\lambda_1,\lambda), (M_1{\bf t_1},M_2{\bf t_2})}\otimes B$. 
In this way we identify a reciprocal line figure spectrum which is analogous to the line figure $LF({\bf K}(\C,\ul{a}))$.


\begin{definition}\label{d:periodiclocalisationline} 
Let $H= \bR{\bf t_1}$ be a line in $\bR^2$. Then $H$ is a \emph{periodic localisation line} for the Delone bar-joint framework $\G$ in $\bR^2$ if 
there exists a basis $\ul{\bf t}=\{{\bf t_1},{\bf t_2}\}$ such that
for every $\epsilon>0$ there is a nonzero infinitesimal flex $u$, a phase factor $\lambda_1\in \bT$, and positive integers $M_1, M_2$, such that the following holds. 
\medskip

(i) The modulation $\phi_{(\lambda_1,\lambda), (M_1{\bf t_1},M_2{\bf t_2})}\cdot u$ is an infinitesimal flex, for all $\lambda \in \bT$.
\medskip

(ii) The modulation  $\phi_{(\lambda_1,\lambda), (M_1{\bf t_1},M_2{\bf t_2})}\cdot u$ is mostly $\epsilon$-close to the restriction of a velocity field $\phi_{(\lambda_1,\lambda), (M_1{\bf t_1},M_2{\bf t_2})}\otimes B$, for some unit cell velocity vector $B$,  for all $\lambda \in \bT$.
 \end{definition}

As with the terminology ``periodic slippage line" the adjective ``periodic" in the term ``periodic localisation line" refers to the fact that there are $H$-localised flexes appearing in periodic bands. Indeed, the first condition of Definition \ref{d:periodiclocalisationline} implies, by averaging arguments as before, that $\G$ has $H$-localised infinitesimal flexes supported in a periodic bands parallel to $H$.

We refer to the union of the periodic localisation lines as the \emph{periodic localisation line flex figure} and denote it as $PLLF(\G)$. This is a line figure in the ambient space and we have the following inclusions for Delone frameworks in $\bR^2$,
\[
PSF(\G) \subseteq PLLF(\G) \subseteq LFF(\G). 
\]
We remark that if $\G$ has some sufficiently strong form of aperiodic order and has no unbounded linearly localised flexes, then our expectation is that the linearly-localised flex figure $LFF(\G)$ will coincide with $PLLF(\G)$.

 
\begin{definition}\label{d:limitspectra} Let $\G$ be a Delone framework in $\bR^2$ and let $\ul{a}$ be a basis. 
The \emph{limit spectrum}, or \emph{limit zero mode spectrum}, of $\G$ with respect to $\ul{a}$ is the line figure
\[ 
{\bf L}_{\rm lim}(\G,\ul{a}) = PLLF(\G)^{\ul{a}}.
\]
\end{definition}

\begin{thm}\label{t:specsequalforGP}
Let $\G_P$ be a regular multigrid parallelogram bar-joint framework and let $\ul{a}$ be a basis for $\bR^2$. Then
\[
 {\bf L}_{\rm lim}(\G_P,\ul{a})=  {\bf L}_{\rm slip}(\G_P,\ul{a}) = RF(P)^{\ul{a}}.
\] 
\end{thm}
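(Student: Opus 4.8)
The plan is to prove the two equalities by a squeeze, using the chain of inclusions $PSF(\G_P)\subseteq PLLF(\G_P)\subseteq LFF(\G_P)$ already recorded before Definition \ref{d:limitspectra}, together with Theorem \ref{t:slippageequalsribbon} at one end and the localised-flex characterisation from \cite{pow-qc1} at the other. Applying the projective map $\theta_{\ul{a}}$ (which by the discussion following Theorem \ref{t:LFFandLF} is a bijection of line figures, taking $PSF$ to ${\bf L}_{\rm slip}$ and $PLLF$ to ${\bf L}_{\rm lim}$), it suffices to establish the ambient-space identities
\[
RF(P) = PSF(\G_P) \subseteq PLLF(\G_P) \subseteq RF(P),
\]
so the real content is the last inclusion $PLLF(\G_P)\subseteq RF(P)$, i.e. every periodic localisation line of $\G_P$ lies in the ribbon figure.

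First I would invoke Theorem \ref{t:slippageequalsribbon} for the leftmost equality $PSF(\G_P)=RF(P)$, and the stated inclusion $PSF\subseteq PLLF$ for the middle step; these are immediate. For the final inclusion, let $H=\bR{\bf t_1}$ be a periodic localisation line. The key observation, noted right after Definition \ref{d:periodiclocalisationline}, is that condition (i) together with the averaging argument (take $\lambda$ a $p$-th root of unity, average over $j$, and choose $M_2$ so that $pM_2\lvert{\bf t_2}\rvert$ exceeds the uniform bar-length bound of the Delone framework $\G_P$) forces the existence of a \emph{nonzero} $H$-localised infinitesimal flex of $\G_P$, supported in a single band $C_k=\bR{\bf t_1}\times[kM_2{\bf t_2},(k+1)M_2{\bf t_2})$. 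One must be slightly careful here that the averaged flex is genuinely nonzero: since $u\neq 0$ and is mostly $\epsilon$-close to a nonconstant-in-general $\phi\otimes B$ that is itself nonzero, for small enough $\epsilon$ the restriction $\chi_{C_k}\cdot w$ cannot vanish identically — this is the same reasoning used in the paragraph before Proposition \ref{p:PSFequivalence} and in Example \ref{e:weakbracedpenrose}. Once a nonzero $H$-localised infinitesimal flex of $\G_P$ is in hand, Theorem~2.8 of \cite{pow-qc1} (the characterisation of infinitesimal flexes of regular multigrid parallelogram frameworks) says $H$ must be one of the finitely many ribbon directions, i.e. $H\subseteq RF(P)$. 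This closes the squeeze, giving $PLLF(\G_P)=RF(P)=PSF(\G_P)$, and applying $\theta_{\ul{a}}$ yields the theorem.

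The main obstacle I anticipate is precisely the nonvanishing step in the averaging argument: a priori the matricial field $\phi\otimes B$ could be "small" on a large fraction of joints, or $u$ could be nonzero only in a region where the $p$-fold average cancels, so one needs the mostly-$\epsilon$-close condition to be leveraged at a fixed scale $N$ to guarantee a band $C_k$ on which $w$ is uniformly close to a nonzero constant velocity and hence nonzero. This is handled exactly as in the text preceding Proposition \ref{p:PSFequivalence}, but it requires quoting that passage carefully rather than re-deriving it. A secondary point is purely bookkeeping: one must check that $\theta_{\ul{a}}$ applied to the inclusions $PSF\subseteq PLLF\subseteq RF$ (the last now an equality of ambient line figures) commutes with the definitions ${\bf L}_{\rm slip}=PSF^{\ul{a}}$, ${\bf L}_{\rm lim}=PLLF^{\ul{a}}$, which is immediate from Definitions \ref{d:slippagespec} and \ref{d:limitspectra}. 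Everything else is a direct appeal to Theorem \ref{t:slippageequalsribbon}, the inclusion chain, and \cite{pow-qc1}.
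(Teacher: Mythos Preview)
Your proposal is correct and follows essentially the same route as the paper: invoke Theorem~\ref{t:slippageequalsribbon} for $PSF(\G_P)=RF(P)$, use the inclusion $PSF\subseteq PLLF$, and for the reverse containment take a periodic localisation line $H$, apply the averaging argument to extract a nonzero $H$-localised flex, then cite \cite{pow-qc1} to force $H\in RF(P)$, finally transporting everything via $\theta_{\ul{a}}$. The paper's proof is terser and does not pause over the nonvanishing subtlety you flag, simply writing ``averaging as before'' with reference to the passage after Definition~\ref{d:periodiclocalisationline}; your extra care there is reasonable but not a departure in method.
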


\begin{proof} By Theorem \ref{t:slippageequalsribbon} we have  
${\bf L}_{\rm slip}(\G_P,\ul{a}) = RF(P)^{\ul{a}}.$
Let $H$ be a periodic localisation line for the pair  $\G_P, \ul{a}$. Then, averaging as before, it follows that $\G_P$ has an $H$-localised infinitesimal flex. The line $H$ is necessarily a line in the ribbon figure $RF(P)$, by  \cite{pow-qc1}, and so ${\bf L}_{\rm lim}(\G_P,\ul{a})$ is contained in $RF(P)^{\ul{a}}$.
\end{proof}

\begin{thm}\label{t:elinofC}
Let $\C$ be a crystallographic bar-joint framework in $\bR^2$ with periodicity basis $\ul{a}$. Then 
\[
LF({\bf K}(\C,\ul{a})) \subseteq {\bf L}_{\rm lim}(\C,\ul{a}).
\]
\end{thm}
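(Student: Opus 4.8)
The plan is to show that every line $L$ in $LF({\bf K}(\C,\ul{a}))$ is the reciprocal $H^{\ul a}$ of a periodic localisation line $H$ for the pair $(\C,\ul{a})$, so that $L \subseteq {\bf L}_{\rm lim}(\C,\ul{a}) = PLLF(\C)^{\ul a}$. There are two cases, dictated by the dichotomy noted after Corollary \ref{c:dualline}. If ${\bf K}(\C,\ul{a}) = \bR^2_{\bf k}$ then $LF({\bf K}(\C,\ul{a})) = \bR^2_{\bf k}$, and by Lemma \ref{l:bandconverseOrderN} the framework $\C$ has a nonzero local infinitesimal flex $z$; I will argue directly that every line $H$ through the origin (rational or not) is a periodic localisation line, using the finitely-supported flex $z$ together with phase-modulated sums of its $\ul{a}$-translates, so $PLLF(\C)$ is all of $\bR^2$ and hence $PLLF(\C)^{\ul a} = \bR^2_{\bf k}$. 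If instead the spectrum is proper, then by Theorem \ref{t:LFFandLF} every line of $LF({\bf K}(\C,\ul{a}))$ is $H^{\ul a}$ for a rational line $H$ of $LPFF(\C)$, i.e.\ there is a nonzero $H$-localised phase-periodic infinitesimal flex $z$ of $\C$.

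The heart of the argument is then the proper case: given a nonzero $H$-localised phase-periodic flex $z$ with $H = \bR b_1$, $b_1 = j_1 a_1 + j_2 a_2$ integral, and phase factor $\lambda_1 = e^{2\pi i\gamma_1}$ for translation by $b_1$, I must verify that $H$ satisfies Definition \ref{d:periodiclocalisationline}. Complete $\ul{b} = \{b_1, b_2\}$ to a periodicity basis for $\C$. Take ${\bf t_1} = b_1$, ${\bf t_2} = b_2$. The point is that for this choice, and for \emph{any} $\epsilon > 0$, I can take $u = z$ itself together with suitable positive integers $M_1, M_2$. Since $z$ is exactly phase-periodic with phase factor $\lambda_1$ for the sublattice generated by $b_1$ and is supported in a band of bounded width around $H = \bR b_1$, for $M_2$ large enough the band $C_0 = \bR b_1 \times [0, M_2 b_2)$ contains the support of (a fundamental-domain copy of) $z$, and the modulation $\phi_{(\lambda_1,\lambda),(M_1 b_1, M_2 b_2)}\cdot z$ is genuinely an infinitesimal flex for every $\lambda$ — it coincides band-by-band with $\lambda^k$ times $z$ restricted to that band, which is a flex because $z$ has no bars crossing the band boundaries once $M_2$ is large. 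For condition (ii) I choose the unit-cell velocity matrix $B$ to be exactly the array of velocity vectors of $z$ on the subcells of $C_0$ (here $M_1$ is chosen just large enough, relative to the density of joints, that each subcell holds at most one joint, using the phase-periodicity of $z$ to fill the $b_1$-direction consistently); then $\phi_{(\lambda_1,\lambda),(M_1 b_1, M_2 b_2)}\otimes B$ restricts on the joints to exactly $\phi_{(\lambda_1,\lambda),(M_1 b_1, M_2 b_2)}\cdot z$, so the two fields are not merely mostly $\epsilon$-close but equal. Hence $H$ is a periodic localisation line, and $L = H^{\ul a} = H^{\ul b}$ transported by $\theta_{\ul a}\theta_{\ul b}^{-1}$ lies in $PLLF(\C)^{\ul a} = {\bf L}_{\rm lim}(\C,\ul{a})$, using Lemma \ref{l:changebasisformula}(ii) to pass between the two periodicity bases.

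I anticipate the main obstacle to be bookkeeping in condition (ii): matching the phase field $\phi_{(\lambda_1,\lambda),(M_1 b_1,M_2 b_2)}$ — which is piecewise-constant on a parallelepiped subcell partition adapted to $\ul{b}$ — against the intrinsic $b_1$-periodicity-up-to-$\lambda_1$ of $z$. One must check that a single choice of $M_1$ makes the restriction of $\phi_{(\lambda_1,\lambda),(M_1 b_1,M_2 b_2)}\otimes B$ to the joints equal to $\phi_{(\lambda_1,\lambda),(M_1 b_1,M_2 b_2)}\cdot z$, i.e.\ that the phase $z$ already carries in the $b_1$-direction is exactly reproduced by the $\lambda_1^{k_1}$ factor of the ambient phase field when $k_1$ is read off in units of $M_1 b_1$ rather than $b_1$. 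This forces $\lambda_1$ in the triple to be the $M_1$-th power of the phase of $z$, or equivalently one re-scales at the outset; this is precisely the kind of root-of-unity/change-of-exponent adjustment handled by Proposition \ref{p:surjection}, so no genuine difficulty arises, only careful indexing. The degenerate ${\bf K} = \bR^2_{\bf k}$ case is comparatively easy: a finitely supported flex automatically produces, via $\sum_k \mu^{-k} T_k z$ along any rational direction and a limiting/density argument along irrational directions, the required approximate phase-periodicity in \emph{every} direction, so $PLLF(\C) = \bR^2$ and the inclusion is trivially an equality there.
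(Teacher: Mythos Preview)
Your overall architecture mirrors the paper's: split into the full-spectrum case (use Lemma \ref{l:bandconverseOrderN} to obtain a local flex) and the proper case (use Theorem \ref{t:LFFandLF} to obtain an $H$-localised phase-periodic flex $z$), then verify Definition \ref{d:periodiclocalisationline} exactly. That is the right plan.

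However, there is a concrete gap in the proper case. You take $u=z$, whose support lies in a \emph{single} band $C_0$. You then claim that with $B$ chosen to record the velocities of $z$ on the subcells of $C_0$, the matricial field $\phi_{(\lambda_1,\lambda),(M_1 b_1,M_2 b_2)}\otimes B$ restricts on the joints to exactly $\phi_{(\lambda_1,\lambda),(M_1 b_1,M_2 b_2)}\cdot z$. This is false: the matricial field $\phi\otimes B$ is, by construction, phase-periodic in the ${\bf t_2}$-direction, so for $B\neq 0$ it is nonzero on \emph{every} band $C_k$, whereas $\phi\cdot z$ vanishes identically outside $C_0$. In any square window of side $2N$ with $N$ large the band $C_0$ carries only a proportion $\sim M_2|{\bf t_2}|/(2N)$ of the joints, so $\phi\cdot z$ and $\phi\otimes B$ disagree on almost all joints and are certainly not mostly $\epsilon$-close. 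Condition (ii) therefore fails for your choice of $u$.

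The paper repairs exactly this point by taking $u$ to be the full IFM
\[
u=\sum_{k_2\in\bZ}\lambda_2^{-k_2}S_2^{k_2}u_{\rm loc},
\]
that is, it sums ${\bf t_2}$-translates of the localised flex so that $u$ is supported on every band and is genuinely phase-periodic for the basis ${\bf t}$. With this $u$ the modulation $\phi_{(\lambda_1,\lambda)}\cdot u$ is nonzero on each band and can be matched globally by $\phi_{(\lambda_1,\lambda)}\otimes B$ for an appropriate $B$. The $\lambda_1$-bookkeeping you flag is then the only residual indexing issue, and it is the same one the paper handles implicitly. So your strategy is correct, but the specific choice $u=z$ must be replaced by the periodically extended version; once you do that you are following the paper's proof essentially verbatim.
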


\begin{proof}
If ${\bf K}(\C,\ul{a}))= \bR^2$  then there exists a local infinitesimal flex $u_{\rm loc}$ by Theorem \ref{t:orderN}. Let ${\bf t}$ be a basis and $H=\bR {\bf t_1}$, with translation group $T_k, k\in \bZ^2$. Replacing ${\bf t_1}$ and ${\bf t_2}$ with $M{\bf t_1}$ and $M{\bf t_2}$ we may assume that the supports of
the velocity fields $T_ku_{\rm loc}$, for $k\in \bZ^2$, are disjoint. For any given multiphase $\omega$ the velocity field
\[
u^\omega= \sum_{k\in \bZ^2} \omega^{-k} T_ku_{\rm loc}
\]
is an $\omega$-periodic infinitesimal flex with respect to {\bf t}.
Moreover, $u^{(1,\lambda)}$ is equal to the modulation $\phi_{(1,\lambda),({\bf t_1},{\bf t_2})}\cdot u^{(1,1)}$. Thus the requirement for $H$ to be a periodic localisation line is satisfied (exactly for all $\epsilon$) and so equality of the line figures follows in this case.
 
Suppose next that ${\bf K}(\C,\ul{a})$ is proper and contains a line $L$. We show that $L$ is a periodic localisation line. Indeed, the definition of a periodic localisation line has been modelled on the crystallographic case, and we check that the conditions of the definition hold exactly in this case also.

 By the remarks following Corollary \ref{c:dualline} the line $L$ is rational with respect to $\ul{a}$. By Theorem \ref{t:bandconverseDim2}, $\C$ has an $H$-localised phase-periodic flex $u_{\rm loc}$ for a rational line $H=\bR{\bf t_1}$, with a phase-factor $\lambda_1\in \bT$, where $H^{\ul{a}}$ is parallel to $L$. Let ${\bf t_2}$ be a rational vector for $\C$ with ${\bf t}=\{{\bf t_1}, {\bf t_2}\}$ a basis. Now for 
any $\lambda_2\in \bT$, we may define
\[
u =  \sum_{k_2\in \bZ} \lambda_2^{-k_2} S_2^{k_2}u_{\rm loc}
\]
where $S_2$ is the translation isometry for $ {\bf t_2}$.
Now, taking $M_1=M_2=1$, the conditions (i) and (ii) of Definition \ref{d:periodiclocalisationline} for $H$ to be a periodic localisation line are satisfied exactly (for all $\epsilon$), where $B$ is a unit cell velocity vector matrix determined by the velocity vectors of $u$ on the joints of $\C$ in the unit cell for the partition defined by ${\bf t}$. 
\end{proof}

\begin{example}\label{e:kag_equality}
Let $\ul{a}$ be any periodicity basis for the kagome framework $\C_{\rm kag}$. It is well-known that ${\bf K}(\C_{\rm kag},\ul{a})$ consists of the union of the integral translates of 3 lines through the origin. These 3 lines are the lines $H_1^{\ul{a}}, H_2^{\ul{a}}, H_3^{\ul{a}}$ which are reciprocal to the 3 lines $H_1, H_2, H_3$ in ambient space for the 3 linear directions of the kagome tiling edges. We claim that ${\bf L}_{\rm lim}(\C_{\rm kag},\ul{a})$ is precisely the union of the 3 reciprocal lines, and so is equal to $LF({\bf K}(\C_{\rm kag},\ul{a}))$.

In view of the previous theorem it suffices to show that a line $H$ of $PLLF(\C_{\rm kag})$ is one of the 3-lines $H_1, H_2, H_3$. 
From the definition of $PLLF(\C_{\rm kag})$ and the averaging argument, there exists a nonzero $H$-localised infinitesimal flex $u$ of $\C_{\rm kag}$. 
We now use the fact that the infinitesimal flex space of $\C_{\rm kag}$ has an explicit countable {free basis} of infinitesimal flexes, each of which is $H_i$-localised for some $i$. See  \cite{bad-kit-pow-2} for example. 
Recall that a \emph{free basis}\cite{bad-kit-pow-2} of a vector subspace $\V$ of velocity fields for a countable bar-joint framework is a finite or countable subset $u_1, u_2, \dots $ with the property that each $u$ in $\V$ can be written as a unique linear combination $u= \sum_{k} \alpha_ku_k$. From this we see that $u$ must be a finite linear combination of $H_j$-localised infinitesimal flexes, for some $j$, and so $H=H_j$ as desired.
\end{example}

The argument in the previous example similarly applies to any crystallographic framework in $\bR^2$ which has a free basis of linearly localised flexes. Although not every crystallographic framework is in posession of a free basis it seems plausible that the equality ${\bf L}_{\rm lim}(\C,\ul{a})=LF({\bf K}(\C,\ul{a}))$ holds in general.

 By defining the limit spectrum as a union of lines through the origin we have not sought any finer limiting information that might be given by the phase factors of localised flexes. In particular the isolated zero modes (Weyl modes) of a crystallographic framework are not reflected in the limit spectrum.
It would be interesting to take account of some such information in the case of quasicrystal frameworks. In particular this would be relevant for  braced Penrose rhomb frameworks given in \cite{pow-qc1} that have a finite dimensional infinitesimal flex space.

\section{Further directions}\label{s:furtherdirections}
In 3 dimensions a line $L$ in the zero mode spectrum ${\bf K}(\C,\ul{a})$ of a crystallographic framework corresponds to $H$-localised infinitesimal flexes for a plane $H$ though the origin. Here the plane $H$ has a reciprocal line $H^{\ul{a}}$ which is parallel to $L$. See Section \ref{ss:lines_imply_localised}. Also, the slippage spectrum and limit spectrum of a Delone bar-joint framework in $\bR^3$ can be similarly defined, as a reciprocal line figure of an appropriate hyperplane figure in the ambient space. It would be interesting to compute such linear spectra, either analytically or computationally, for particular 3D aperiodic frameworks and more structured quasicrystals. 

In 2 dimensions we note the following further problems.
\medskip

1. It would be natural to determine linear zero mode spectra for  frameworks which have aperiodic order by virtue of being derived from  a multigrid parallelogram tiling $P$ in some well-defined local manner. As well as simple augmentations of $\G_P$ by bars and joints, such as  bracing bars \cite{pow-qc1}, or jointed bracing bars as in Example \ref{e:weakbracedpenrose}, such frameworks may be derived from $G_P$ by substitution rules or periodic Henneberg moves.
\medskip

2.  As we have remarked in Section \ref{ss:G_Pandphasefields}, for a regular multigrid parallelogram framework with $r$ component grids we have defined, in the companion article \cite{pow-qc1}, a multivariable reduced zero mode spectrum $\Omega(\G_P, \A)$ in $\bT^r$. This is based on  multivariable phase fields for a single ``patchwork" partition of the ambient space $\bR^2$ which is defined by \emph{all} the ribbons. Here $\A$ is a set of $r$ affine transformations which determine the the component grids in terms of a reference grid.  Approximation does not feature in this definition and $\Omega(\G_P, \A)$  can be viewed as a straight generalisation of the RUM spectrum of the grid framework $\C_{\bZ^2}$.
It would be natural to determine connections between the linear zero mode spectra considered above and projections of the multivariable zero mode spectrum, both for parallelogram frameworks and their derived frameworks.
\medskip

3. The slippage spectrum and limit spectrum are defined in terms of  infinitesimal flexes with strictly localised supports. On the other hand bar-joint frameworks which are random approximants or generic approximants to a quasicrystal framework are more likely to posses ``approximately localised flexes", that is, flexes with exponential decay away from a linear direction. This suggests that it would be appropriate to define less strict forms of the linear zero mode spectra given here in order to capture this, and to explore other forms of approximation in place of Definition \ref{d:epsilonN} for this purpose.

\medskip
\bibliographystyle{abbrv}
\def\lfhook#1{\setbox0=\hbox{#1}{\ooalign{\hidewidth
  \lower1.5ex\hbox{'}\hidewidth\crcr\unhbox0}}}

\end{document}